\numberwithin{equation}{section}
\theoremstyle{plain}
\newtheorem{theorem}{Theorem}[section]
\newtheorem{thm}[theorem]{Theorem}
\newtheorem{lemma}[theorem]{Lemma}
\newtheorem{cor}[theorem]{Corollary}
\theoremstyle{definition}
\newtheorem{definition}[theorem]{Definition}
\newtheorem{remark}[theorem]{Remark}
\let\c@equation\c@theorem  % incorporate equation numbering
\DeclareMathOperator{\Coder}{Coder}
\DeclareMathOperator{\Ext}{Ext}
\newcommand{\C}{{\mathcal{C}}}
\newcommand{\E}{{\mathcal{E}}}
\newcommand{\Id}{{\rm Id}}
\newcommand{\Inn}{{\rm Inn}}
\newcommand{\cExt}{{\mathcal{E}} \! {\it{xt}}}
\newcommand{\one}{\mathbf{1}}
\newcommand{\kk}{\mathbf{k}}
\DeclareMathOperator{\Hom}{Hom}
\DeclareMathOperator{\Ima}{Im}
\DeclareMathOperator{\im}{Im}
\DeclareMathOperator{\Ker}{Ker}
\newcommand{\DOT}{\begin{picture}(2.5,2)
               (1,1)\put(2,3){\circle*{2}}\end{picture}}
\newcommand{\bu}{\DOT}
\newcommand{\ot}{\otimes}
\newcommand{\del}{\partial}
\begin{document}

\title[Lie structure]
{Graded Lie structure on cohomology of 
some exact monoidal categories
}

\author{Y.\ Volkov}
\address{Department of Mathematics and Mechanics, Saint Petersburg State
University, Saint Petersburg, Russia}
\email{wolf86\underline{\hspace{.15cm}}666@list.ru}
\author{S.\ Witherspoon}
\address{Department of Mathematics, Texas A\&M University, College Station, Texas 77843, USA}
\email{sjw@tamu.edu}

\date{30 July 2023}

\thanks{The first author was supported by RFBR according to
the research project 20-01-00030 and in part by
a Young Russian Mathematics Award.
The second author was partially supported by 
NSF grants DMS-1401016 and DMS-1665286.}

\bibliographystyle{abbrv}   

\begin{abstract}
For some exact monoidal categories,
we describe explicitly a connection between 
topological and algebraic definitions of the Lie bracket
on the extension algebra of the unit object. 
The topological definition, due to Schwede and to Hermann,
involves loops in extension categories.
The algebraic definition, due to the first author, 
involves homotopy liftings of maps. 
As a consequence of our description, 
we prove that the topological definition indeed 
yields a Gerstenhaber algebra structure
in this monoidal category setting.
This answers a question of Hermann
for those exact monoidal categories in which the unit object
has a particular type of resolution that is called power flat. 
For use in proofs, we generalize $A_{\infty}$-coderivation
and homotopy lifting techniques 
from bimodule categories to these exact monoidal categories. 
\end{abstract} 

%\subjclass[2000]{16E65, 16W30, 16W50, 81R50}

%\keywords{}

\maketitle

\section{Introduction}

The Lie structure on Hochschild cohomology of an algebra is more difficult to
understand than is the associative algebra structure. 
There are 
fewer techniques available for handling it in relation to 
arbitrary resolutions 
or to arbitrary extensions of modules.
A topological approach introduced by Schwede~\cite{Schwede} and expanded
to some types of monoidal categories by Hermann~\cite{Hermann2} expresses
the bracket as a loop in an extension category. 
Shoikhet~\cite{Shoikhet1,Shoikhet2} and Lowen and Van den Bergh~\cite{LowVan}
offered related advances in the direction of Deligne's Conjecture. 
An algebraic approach introduced by Negron and the 
authors~\cite{NW1,Volkov} describes the bracket on an 
arbitrary projective resolution via homotopy lifting functions~\cite{Volkov}
which were expanded to $A_{\infty}$-coderivations~\cite{NVW}, providing further insight
and theoretical tools. 

In this paper, we generalize the algebraic approach 
of homotopy liftings and $A_{\infty}$-coderivations 
from the Hochschild cohomology of algebras
to the cohomology of exact monoidal categories
in which the unit object has a particular type of resolution
that is called power flat (see Definition~\ref{def:powerflat}).
We use these techniques to 
make a direct connection to the work of Schwede and Hermann.
Specifically, the topological definition of the bracket is
a loop traversing four incarnations of cup product: tensor product in each
of two orders and Yoneda splice in each of two orders. 
This definition calls on an isomorphism from homotopy classes of loops on
a category of $n$-extensions to a category of $(n-1)$-extensions,
given by Retakh and by Neeman~\cite{NeRe,Retakh}. 
The algebraic definition of the bracket via homotopy liftings 
then essentially provides a
homotopy between two resulting paths from the Yoneda splice in one
order to that in the other.  
As a consequence of this explicit description and connection
with topology, we  prove that Hermann's bracket in a
monoidal category setting indeed induces a
Gerstenhaber algebra structure on cohomology, 
answering~\cite[Question~5.2.15]{Hermann2}
for exact monoidal categories in which the unit object
has a power flat resolution.

We begin 
in Section~\ref{sec:exact} by recalling some standard definitions and
notation for exact categories and $n$-extensions. 
We then summarize some of Retakh's work on loops in
extension categories in Section~\ref{sec:SH},
in particular Schwede's and Hermann's formulation
of his work in view of its application to Lie structures.
In Section~\ref{sec:Gb} we generalize the $A_{\infty}$-coalgebra
techniques of~\cite{NVW} and the homotopy lifting techniques of~\cite{Volkov} 
to exact monoidal categories in which the unit object has a
power flat resolution,
defining a bracket on the extension algebra of the unit object
that makes it a Gerstenhaber algebra.
Finally, we make a direct connection to Schwede's
and Hermann's topological approach
in Section~\ref{sec:equiv}.

\section{Exact categories and extensions}\label{sec:exact}

In this section we recall definitions and basic facts, and we 
introduce some notation concerning exact categories and $n$-extensions.

\begin{definition} Let $\C$ be an additive category and 
$\mathcal{E}$ a class of distinguished sequences $X\rightarrow Y\rightarrow Z$ 
of $\C$.
We call $\E$ a class of {\it conflations} if for every  
sequence $X\xrightarrow{\iota} Y\xrightarrow{\pi} Z$ in $\E$,  
the morphism $\iota$ is a kernel of $\pi$ and the morphism $\pi$ is a cokernel of $\iota$. 
A morphism $\iota: X\rightarrow Y$ in $\C$ 
is an {\it inflation} if there exists a conflation of the form $X\xrightarrow{\iota} Y\xrightarrow{\pi} Z$.
A morphism $\pi:Y\rightarrow Z$ in $\C$ is a {\it deflation} if there exists a conflation of the form $X\xrightarrow{\iota} Y\xrightarrow{\pi} Z$. The pair $(\mathcal{C},\mathcal{E})$ is called an {\it exact category} if the following properties hold:
\begin{enumerate}
\item $0\rightarrow 0\rightarrow 0$ is a conflation;
\item the composition of any two deflations is also a deflation;
\item if $\pi: Y\rightarrow Z$ is a deflation and $f:Y'\rightarrow Z$ is any morphism, then there exists a pullback \begin{tikzpicture}[node distance=0.9cm]
 \node(A) {\tiny$K$};
 \node(B) [right  of=A] {\tiny$Y'$};
\node(C) [below  of=A] {\tiny$Y$};
\node(D) [below  of=B] {\tiny$Z$};

\draw [->,>=stealth'] (A)  to node[above]{\tiny$\pi'$ } (B) ; 
\draw [->,>=stealth'] (A)  to node[left]{\tiny$f'$ } (C) ; 
\draw [->,>=stealth'] (B)  to node[right]{\tiny$f$ } (D) ; 
\draw [->,>=stealth'] (C)  to node[above]{\tiny$\pi$ } (D) ;
\end{tikzpicture} with deflation $\pi'$;
\item if $\iota: X\rightarrow Y$ is an inflation and $g:X\rightarrow Y'$ is any morphism, then there exists a pushout
\begin{tikzpicture}[node distance=0.9cm]
 \node(A) {\tiny$X$};
 \node(B) [right  of=A] {\tiny$Y$};
\node(C) [below  of=A] {\tiny$Y'$};
\node(D) [below  of=B] {\tiny$R$};

\draw [->,>=stealth'] (A)  to node[above]{\tiny$\iota$ } (B) ; 
\draw [->,>=stealth'] (A)  to node[left]{\tiny$g$ } (C) ; 
\draw [->,>=stealth'] (B)  to node[right]{\tiny$g'$ } (D) ; 
\draw [->,>=stealth'] (C)  to node[above]{\tiny$\iota'$ } (D) ;
\end{tikzpicture} with inflation $\iota'$.
\end{enumerate}
\end{definition}

\begin{remark} One can show (see \cite[Appendix~A]{exKeller}) that if $(\mathcal{C},\mathcal{E})$ is an exact category, then any split exact sequence is a conflation and the composition of any two inflations is an inflation. Moreover, if $\iota$ has a cokernel and $f\iota$ is an inflation for some $f$, then $\iota$ is an inflation itself and dually if  $\pi$ has a kernel and $\pi g$ is a deflation for some $g$, then $\pi$ is a deflation. One can also show 
(see \cite[Theorem A.1 and Remark A.3]{exact}) that any extension closed full subcategory of an abelian category is exact and that any small exact category can be realized as an extension closed full subcategory of some abelian category.
\end{remark}

We will usually omit the notation $\mathcal{E}$ and call $\mathcal{C}$ an exact category meaning that there is some fixed class of conflations for $\mathcal{C}$. We are going to follow the approach of \cite{Hermann2} to the study of homological properties of exact categories. Namely, we will study the categories of $n$-extensions in $\mathcal{C}$.

\begin{definition} A sequence $\cdots\xrightarrow{d_1}E_1\xrightarrow{d_0}E_0$ with a morphism $\mu_E:E_0\rightarrow X$ is called a {\it resolution} of $X\in\C$ if there are conflations $K_0\xrightarrow{\iota_0}E_0\xrightarrow{\mu_E}X$; $K_1\xrightarrow{\iota_1}E_1\xrightarrow{\pi_0}K_0$; $\ldots$ such that $d_i=\iota_i\pi_i$ for all $i\ge 0$. In this case we will denote the corresponding resolution by $(E,d,\mu_E)$ where $E=(E_i)_{i\ge 0}$, $d=(d_i)_{i\ge 0}$.
\end{definition}

Of course, resolutions are particular cases of complexes, i.e. of sequences $\cdots\xrightarrow{d_{i+1}}E_{i+1}\xrightarrow{d_i}E_i\xrightarrow{d_{i-1}}E_{i-1}\xrightarrow{d_{i-2}}\cdots$ such that $d_id_{i+1}=0$ for all $i\in\mathbb{Z}$. Such a complex we denote by $(E,d)$. If $(E',d')$ is another complex, then a {\it degree $n$ morphism} from $(E, d)$ to $(E', d')$ is a sequence of maps $f=(f_i)_{i\in\mathbb{Z}}$ with $f_i\in\Hom_\C(E_i,E'_{i-n})$. In particular, $d$ is a degree one morphism from $(E, d)$ to itself. Degree $n$ morphisms between two fixed complexes form an abelian group in an obvious way.
Moreover, if $g$ is a degree $m$ morphism from $(E',d')$ to $(E'', d'')$, then we define the composition $gf$ as the degree $(n+m)$ morphism from $(E, d)$ to $(E'', d'')$ defined by the equality $(gf)_i = g_{i-n}f_i$ for all $i$.
For a degree $n$ morphism $f$ as above, we denote by $\del(f)$ the degree $(n + 1)$ morphism defined by the equality $\del(f) =d'f - (-1)^nf d$.
We will call $f$ a {\it chain map} if $\del(f) = 0$ and we will say that a degree $n$ morphism $f'$ from $(E, d)$ to $(E', d')$ is {\it homotopic} to $f$ and write $f'\sim f$ if $f-f' = \del(s)$ for some degree $(n-1)$ morphism $s$. We will call $f$ {\it null homotopic} if $f\sim 0$. Any object $X$ of $\C$ we will consider also as a complex $(\tilde X,0)$ with $\tilde X_0=X$ and $\tilde X_i=0$ for $i\not=0$. For two resolutions $(E, d, \mu_E)$ and $(E', d', \mu_{E'})$ of $X$, we will call a degree zero chain map $f$ from $(E, d)$ to $(E', d')$ a {\it morphism of resolutions} if it lifts the identity morphism on $X$, i.e.\ if $\mu_{E'}f=\mu_E$. If the other data is clear from the context, we will sometimes denote the complex $(E, d)$ or even the resolution $(E, d, \mu_E)$ simply by $E$.

\begin{definition} A resolution $(E, d, \mu_E)$ of $X$ is called an {\em $n$-extension} of $X$ by $Y$ if $E_n = Y$ and $E_i = 0$ for $i > n$. The class of all $n$-extensions of $X$ by $Y$ is denoted by $\cExt^n_\C(X, Y )$. As is usual, we set $\cExt^0_\C(X, Y )=\Hom_\C(X, Y )$, but this paper concerns more the case $n \ge 1$ and one may assume throughout that $n\geq 1$ whenever some argument or construction does not work for $n = 0$.
\end{definition}

For an $n$-extension $(E, d, \mu_E)$ of $X$ by $Y$, we introduce some special morphisms. We set
$\iota_E = d_{n-1}:Y\rightarrow  E_{n-1}$ and introduce morphisms $$\kappa_E:Y\rightarrow E \ \mbox{ and } \ \pi_E : E\rightarrow Y$$
of degrees $-n$ and $n$ respectively, that are identity maps in their unique nonzero degrees. Note that $\pi_E$ is a chain map, $\del(\kappa_E) = \iota_E\kappa_E$ and $\pi_E\kappa_E = 1_Y$.

Let us pick $(E, \phi, \mu_E),(F, \psi, \mu_F )\in \cExt^n_\C(X, Y )$.
A {\it morphism of $n$-extensions} from $E$ to $F$ is a morphism of resolutions $f : E\rightarrow F$ that is the identity map in degree $n$, i.e. such that $\pi_F f = \pi_E$. There are only identity morphisms between elements of $\cExt^0_\C(X, Y )$.
Morphisms generate an equivalence relation on $\cExt^n_{\C}(X,Y)$ and the
set of equivalence classes is denoted $\Ext^n_{\C}(X,Y)$. 
Also, with these morphisms, 
$\cExt^n_\C(X, Y )$ is turned into a category for any $n \ge 0$. Then one can define homotopy groups $\pi_i\cExt^n_\C(X, Y )$ of $\cExt^n_\C(X, Y )$ as homotopy groups of the classifying
space $\mathcal{B}\big(\cExt^n_\C(X, Y )\big)$. For a more direct interpretation of the groups $\pi_0\cExt^n_\C(X, Y )$ and $\pi_1\cExt^n_\C(X, Y )$
one can look, for example, at \cite[\S2.2]{Hermann2}.
 In particular, $\Ext^n_\C(X, Y ) = \pi_0\cExt^n_\C(X, Y )$ consists of the classes of elements of $\cExt^n_\C(X, Y )$ modulo the minimal equivalence relation such that $E$ is equivalent to $F$ whenever
 $\Hom_{\cExt^n_\C(X,Y )}(E, F)\not=\varnothing$.
 
 Let us now recall some constructions involving $n$-extensions. First of all, let us pick $(E, \phi, \mu_E) \in \cExt^n_\C(X, Y )$ and two morphisms $\alpha : X'\rightarrow X$ and $\beta : Y\rightarrow Y'$.
 Then we define $E\alpha = (E\alpha, \phi^{\alpha}, \mu_{E\alpha}) \in \cExt^n_\C(X', Y )$ and $\beta E = (\beta E, {}^\beta\phi, \mu_{\beta E}) \in \cExt^n_\C(X, Y')$ in the
following way. Let us construct\\
the pullback \begin{tikzpicture}[node distance=0.8cm]
 \node(A) {\tiny$(E\alpha)_0$};
 \node(Ar) [right  of=A] {};
 \node(B) [right  of=Ar] {\tiny$X'$};
\node(C) [below  of=A] {\tiny$E_0$};
\node(D) [below  of=B] {\tiny$X$};

\draw [->,>=stealth'] (A)  to node[above]{\tiny$\mu_{E\alpha}$ } (B) ; 
\draw [->,>=stealth'] (A)  to node[left]{\tiny$\bar\alpha$ } (C) ; 
\draw [->,>=stealth'] (B)  to node[left]{\tiny$\alpha$ } (D) ; 
\draw [->,>=stealth'] (C)  to node[above]{\tiny$\mu_E$ } (D) ;
\end{tikzpicture} of $\mu_E$ along $\alpha$ and the pushout \begin{tikzpicture}[node distance=0.8cm]
 \node(A) {\tiny$Y$};
 \node(Ar) [right  of=A] {};
 \node(B) [right  of=Ar] {\tiny$E_{n-1}$};
\node(C) [below  of=A] {\tiny$Y'$};
\node(D) [below  of=B] {\tiny$(\beta E)_{n-1}$};

\draw [->,>=stealth'] (A)  to node[above]{\tiny$\iota_E$ } (B) ; 
\draw [->,>=stealth'] (A)  to node[left]{\tiny$\beta$ } (C) ; 
\draw [->,>=stealth'] (B)  to node[left]{\tiny$\bar\beta$ } (D) ; 
\draw [->,>=stealth'] (C)  to node[above]{\tiny$\iota_{\beta E}$ } (D) ;
\end{tikzpicture} of $\iota_E$ along $\beta$.
Now we set $(E \alpha)_i = E_i$, $\phi^{\alpha}_i = \phi_i$ for $i > 0$ and define $\phi^{\alpha}_0$ as the unique morphism such that $\mu_{E_\alpha}\phi^{\alpha}_0=0$ and $\bar\alpha \phi^{\alpha}_0 = \phi_0$.
We set also $(\beta E)_i = E_i$, ${}^\beta \phi_{i-1} = \phi_{i-1}$ for $i < n - 1$, $\mu_{\beta E} = \mu_E$
and define ${}^\beta \phi_{n-2}$ as the unique morphism such that ${}^\beta \phi_{n-2}\iota_{\beta E} = 0$ and ${}^\beta \phi_{n-2}\bar\beta= \phi_{n-2}$.
In the case $n = 1$ the last construction must be slightly corrected, because in this case the pushout construction must be applied to $\mu_{\beta E}\not=\mu_E$.
One can see that $(\beta E)\alpha = \beta(E\alpha)$ and so the notation $\beta E\alpha$ makes sense. Let us now pick two extensions $E, F\in \cExt^n_\C(X, Y )$. We
define their sum (called the {\it Baer sum}) in the following way. First we form the $n$-extension $E\oplus F$ of $X^2$ by $Y^2$ in the obvious way and then define $E + F =
\begin{pmatrix}1&1\end{pmatrix}(E \oplus F)\begin{pmatrix}1\\ 1\end{pmatrix}$.
 This sum operation determines a commutative monoid structure on the set of isomorphism classes of $n$-extensions of $X$ by $Y$.  The zero element for this operation is
 \begin{equation}\label{sigma}
 \sigma_n(X, Y ) =\left(0 \rightarrow Y\xrightarrow{1_Y} Y \rightarrow 0 \rightarrow \cdots \rightarrow 0 \rightarrow X\right)\in \cExt^n_\C(X, Y)    
 \end{equation}
 with $\mu_{\sigma_n(X,Y )} = 1_X$, where for $n = 1$ the middle terms $Y$ and $X$ glue together and form the direct sum $X \oplus Y$.  Moreover, the sum operation passes to $\Ext^n_\C(X, Y )$
and determines the structure of an abelian group on it.
 If the underlying category $\C$ is $\kk$-linear for some commutative ring $\kk$, then $\Ext^n_\C(X, Y )$ is a $\kk$-module, where the $n$-extension $aE = Ea$ is defined via the identification of $a\in\kk$ with the morphism
$a1_X : X \rightarrow X$. 
In particular, if $a$ is invertible and $E$ is reserved for $(E, \phi, \mu_E)$, then $aE$ denotes the $n$-extension $(E, \phi, a^{-1}\mu_E)$.
 We set $\Ext^{\bu}_\C(X, Y ) = \oplus_{n\ge 0}\Ext^n_\C(X, Y )$. Note that at this moment this definition makes sense.

Let us pick now $(E, \phi, \mu_E) \in  \cExt^n_\C(X, Y )$ and $(F, \psi, \mu_F ) \in \cExt^m_\C(Y, Z)$. We define the $(m + n)$-extension $F\#E$ as the Yoneda splice
\[ 
0  \rightarrow Z \xrightarrow{\iota_F}F_{m-1} \xrightarrow{\psi_{m-2}}\cdots  \xrightarrow{\psi_0}F_0 \xrightarrow{\iota_E\mu_F}E_{n-1} \xrightarrow{\phi_{n-2}}\cdots  \xrightarrow{\phi_0}E_0
\]
with $\mu_{F\#E} = \mu_E$. This construction passes to the sets $\Ext_\C$, 
i.e.\ it induces a product $\#: \Ext^m_\C(Y, Z) \times \Ext^n_\C(X, Y )\rightarrow \Ext^{m+n}_\C(X, Z)$ which is called the {\it Yoneda product}.
In particular, for any object $X$ of $\C$ the set $\Ext^{\bu}_\C(X, X)$ is a ring with respect to operations $+$ and $\#$. If $\C$ is $\kk$-linear, then $\Ext^n_\C(X, X)$ is a $\kk$-algebra.

 Note that one can define the derived category ${\rm D}\mathcal{C}$ of the exact category $\mathcal{C}$ (see, for example, \cite[\S10.4]{exact}).
 Given an $n$-extension $(E, \phi, \mu_E)$ of $X$ by $Y$, one can define a morphism from $X$ to $Y [n]$ in ${\rm D}\C$ as the composition $\pi_E\mu_E^{-1}$ which makes sense because $\mu_E$ is a quasi isomorphism. This correspondence induces an isomorphism between $\Ext_{\mathcal{C}}^n(X,Y)$ and $\Hom_{{\rm D}\mathcal{C}}(X,Y[n])$ that respects the additive ($\kk$-linear) structure and sends the Yoneda product of two sequences to the composition of the corresponding morphisms in the derived category in the sense that $\pi_{F\#E}\mu_{F\#E}^{-1}$ coincides with $(\pi_F\mu_F^{-1})[n]\pi_E\mu_E^{-1}$ up to a sign.  This gives a strong motivation to study the groups $\Ext_{\mathcal{C}}^n(X,Y)$.
 
 If the category $\C$ satisfies an additional property, namely, if it has {\it enough projective objects}, then the groups $\Ext^n_\C(X, Y )$ have another, more usable, description.
 
 \begin{definition} The object $P$ of an exact category $\C$ is called {\it projective} if any deflation $X\rightarrow P$ is a split epimorphism. The resolution $(P, d, \mu_P )$ of $X \in \C$ is called projective if $P_i$ is projective for each $i \ge 0$.
 \end{definition}
 
 If $X$ has a projective resolution $(P, d, \mu_P )$, standard arguments show that there exists a canonical isomorphism of abelian groups ($\kk$-spaces if $\C$ is $\kk$-linear) $\Ext^n_\C(X, Y )\cong\Ker \Hom_\C(d_{n}, Y )/ \Ima \Hom_\C(d_{n-1}, Y )$.
Moreover, the Yoneda product on the left side of this isomorphism can be calculated on the right side via a standard lifting technique. In this paper we will restrict ourselves to the case of $n$-extensions of objects $X\in\C$ having projective
resolutions.
%Projective resolutions are a standard tool for studying homological algebra and all interesting examples that we know admit this tool, so our setting does not seem to be very restrictive.

Let us recall the construction of the isomorphism of abelian groups $\Ext^n_\C(X, Y )\cong\Ker \Hom_\C(d_{n}, Y )/ \Ima \Hom_\C(d_{n-1}, Y )$. Let us first pick some {\it $n$-cocycle}, i.e.\ a degree $n$ chain map $f : P\rightarrow Y$.
We denote by $K(f)$ the element
\begin{equation}\label{Kf}
    0 \rightarrow Y \xrightarrow{\iota_f}K(f)_{n-1}\xrightarrow{d_f} P_{n-2}\xrightarrow{d_{n-3}}\cdots \xrightarrow{d_0}P_0
\end{equation}
of $\cExt^n_\C(X, Y )$ with $\mu_{K(f)} = \mu_P$,  where $K(f)_{n-1}$ is the pushout of the morphisms $d_{n-1}:P_n\rightarrow P_{n-1}$ and $f:P_n\rightarrow Y$. To construct this pushout, one first factors $d_{n-1}$ as $P_n\xrightarrow{\pi_{n-1}}K_{n-1}\xrightarrow{\iota_{n-1}}P_{n-1}$ where $\pi_{n-1}$ is the cokernel of $d_{n}$ and then constructs the pushout of the inflation $\iota_{n-1}$ along the unique morphism $\bar f$ such that $f=\bar f\pi_{n-1}$.
We denote the remaining arrow of this pushout by 
$$\theta_f:P_{n-1}\rightarrow K(f)_{n-1} .$$
  The morphism $d_f$ arises as the unique morphism such that $d_f\iota_f = 0$ and $d_f \theta_f = d_{n-2}$.
In the case $n = 1$ this construction has to be slightly corrected via applying the pushout construction to obtain $\mu_{K(f)}\not= \mu_P$. The map from $\Ker \Hom_\C(d_{n}, Y )$ to $\cExt^n_\C(X, Y)$ sending $f$ to $K(f)$ induces the required isomorphism. The inverse to this isomorphism can be constructed in the following way. For any $n$-extension $(E, \phi, \mu_E)$ of $X$ by $Y$, there exists a morphism of resolutions $\hat f : P\rightarrow E$.
Then the map from $\cExt^n_\C(X, Y )$ to $\Ker \Hom_\C(d_{n}, Y )$ sending $E$ to $\hat f_n = \pi_E\hat f$ for some morphism of resolutions $\hat f$ induces the required inverse isomorphism not depending on the choice of $\hat f$.

\section{Schwede's and Hermann's formulas for Retakh's isomorphism}
\label{sec:SH}

An important feature of homotopy groups of extensions is the isomorphism $\Ext_{\mathcal{C}}^{n-i}(X,Y)\cong \pi_i\cExt_{\mathcal{C}}^n(X,Y)$ proved in \cite[Theorem 1]{Retakh} for an abelian category and in \cite[Theorem 5.2]{NeRe} for a Waldhausen category. In \cite[\S3.1]{Hermann2} the isomorphism $\Ext_{\mathcal{C}}^{n-1}(X,Y)\cong \pi_1\cExt_{\mathcal{C}}^n(X,Y)$ was established explicitly when $\mathcal{C}$ is a factorizing exact category. Let us recall the definition of a factorizing  exact category given in \cite[\S2.1]{Hermann2}.
Suppose that $(E, \phi, \mu_E),(F, \psi, \mu_F )\in \cExt_{\mathcal{C}}^n(X,Y)$ and $\beta:E\rightarrow F$ is a morphism of $n$-extensions. Let $\hat F$ be the $n$-extension of $X$ by $Y$ defined by the sequence
\begin{multline*}
Y\xrightarrow{\tiny\begin{pmatrix}\iota_F\\0\end{pmatrix}}F_{n-1}\oplus E_{n-2}\xrightarrow{\tiny\begin{pmatrix}\psi_{n-2}&0\\0&1\\0&0\end{pmatrix}}F_{n-2}\oplus E_{n-2}\oplus E_{n-3}\\
\xrightarrow{\tiny\begin{pmatrix}\psi_{n-3}&0&0\\0&0&1\\0&0&0\end{pmatrix}}F_{n-3}\oplus E_{n-3}\oplus E_{n-4}\rightarrow\cdots
\rightarrow F_1\oplus E_1\oplus E_0\xrightarrow{\tiny\begin{pmatrix}\psi_{0}&0&0\\0&0&1\end{pmatrix}}F_0\oplus E_0
\end{multline*}
and the morphism $\mu_{\hat F}=\begin{pmatrix}
\mu_F&0 
\end{pmatrix}: F_0 \oplus E_0\rightarrow X$.
Let us define $\hat\beta\in \Hom_{\cExt_{\mathcal{C}}^n(X,Y)}(E,\hat F)$ degreewise. We set
$$
\hat\beta_0=\begin{pmatrix}\beta_0\\ 1_{E_0}\end{pmatrix},\,
 \ \hat\beta_i=\begin{pmatrix}\beta_i\\ 1_{E_i}\\\phi_{i-1}\end{pmatrix}\,(1\le i\le n-2),\, \ \hat\beta_{n-1}=\begin{pmatrix}\beta_{n-1}\\ \phi_{n-2}\end{pmatrix}.
$$
Due to \cite[Definition 2.1.11]{Hermann2}, the exact category $\mathcal{C}$ is called {\em factorizing} if all components of $\hat\beta$ are inflations for any $n\ge 1$, $X,Y\in\mathcal{C}$, any $E, F \in \cExt^n_\C(X, Y )$, and any $\beta\in\Hom_{\cExt^n_\C(X,Y)}(E, F)$.

The next lemma, stating that any exact category is factorizing,
implies that many results of \cite{Hermann2} hold generally
for all exact categories.

\begin{lemma}\label{lem:factorizing}
  Any exact category is factorizing.
\end{lemma}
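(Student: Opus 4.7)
The plan is to split the argument into the easy components $\hat\beta_0,\dots,\hat\beta_{n-2}$ and the critical top component $\hat\beta_{n-1}$. For $0\le i\le n-2$, each $\hat\beta_i$ is a split monomorphism: the projection onto the $E_i$-summand of $\hat F_i$ provides an explicit retraction in every case. By the remark following the definition of exact category, any split exact sequence is a conflation, so every split monomorphism is an inflation. This also handles the case $n=1$ entirely, since only $\hat\beta_0$ occurs there.

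For the hard component $\hat\beta_{n-1}=(\beta_{n-1},\phi_{n-2})^\top:E_{n-1}\to F_{n-1}\oplus E_{n-2}$, the plan is to factor it through the pushout $P$ of $\iota_E:Y\to E_{n-1}$ and $\iota_F:Y\to F_{n-1}$. Both $\iota_E$ and $\iota_F$ are inflations (as top differentials of $n$-extensions), so applying axiom~(4) with each of them in turn as the distinguished inflation shows that the canonical morphisms $j_E:E_{n-1}\to P$ and $j_F:F_{n-1}\to P$ are both inflations. The identity $\beta_{n-1}\iota_E=\iota_F$ (which holds because $\beta$ is a morphism of $n$-extensions) ensures, via the universal property of $P$ applied to the pair $(\beta_{n-1},1_{F_{n-1}})$, the existence of $r:P\to F_{n-1}$ with $rj_F=1_{F_{n-1}}$ and $rj_E=\beta_{n-1}$; in particular $j_F$ is a split inflation.

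I would then invoke the classical fact that pushouts of conflations along arbitrary morphisms are again conflations: pushing the top conflation $Y\xrightarrow{\iota_E}E_{n-1}\xrightarrow{\pi_{n-2}}K_{n-2}$ of the resolution $E$ along $\iota_F$ yields a conflation $F_{n-1}\xrightarrow{j_F}P\xrightarrow{\bar\pi}K_{n-2}$ with $\bar\pi j_E=\pi_{n-2}$. Because $j_F$ is split by $r$, this conflation splits, producing an isomorphism $P\cong F_{n-1}\oplus K_{n-2}$ under which $j_E$ is identified with $(\beta_{n-1},\pi_{n-2})^\top$; in particular the latter is an inflation. Using the factorization $\phi_{n-2}=\iota_{n-2}\pi_{n-2}$ coming from the resolution structure of $E$, I obtain
$$
\hat\beta_{n-1}=\bigl(1_{F_{n-1}}\oplus\iota_{n-2}\bigr)\circ(\beta_{n-1},\pi_{n-2})^\top,
$$
which exhibits $\hat\beta_{n-1}$ as the composition of two inflations (the first being a direct sum of inflations), hence itself an inflation.

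The main obstacle I anticipate is the careful justification of the splitting step in the exact setting: one must verify that a conflation whose inflation admits a retraction really does split as a direct sum of conflations, and that the canonical identification of $\operatorname{coker}(j_F)$ with $K_{n-2}$ sends $j_E$ to $\pi_{n-2}$. Both facts are standard in exact category theory but require explicit argument from the universal properties of pushouts and cokernels rather than blind appeal.
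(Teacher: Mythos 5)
Your argument is correct and follows the paper's overall architecture: the components $\hat\beta_0,\dots,\hat\beta_{n-2}$ are handled the same way, and both proofs reduce the hard case to the same factorization $\hat\beta_{n-1}=\bigl(1_{F_{n-1}}\oplus\iota_{n-2}\bigr)\circ(\beta_{n-1},\pi_{n-2})^\top$ (the paper writes $\iota_\phi$, $\pi_\phi$, $K_\phi$ for your $\iota_{n-2}$, $\pi_{n-2}$, $K_{n-2}$). Where you genuinely diverge is in proving that $(\beta_{n-1},\pi_{n-2})^\top$ is an inflation. The paper constructs the induced map $\gamma:K_\phi\to K_\psi$ on cokernels, observes that the square formed by $\beta_{n-1}$, $\pi_\phi$, $\gamma$, $\pi_\psi$ is a pullback of the deflation $\pi_\psi$, and then cites Keller's lemma that the induced morphism into the product over such a pullback is an inflation. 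You instead form the pushout $P$ of the two inflations $\iota_E$ and $\iota_F$, extract the retraction $r$ of $j_F$ from the compatibility $\beta_{n-1}\iota_E=\iota_F$, split the pushed-out conflation $F_{n-1}\rightarrowtail P\twoheadrightarrow K_{n-2}$, and read off $(\beta_{n-1},\pi_{n-2})^\top$ as the image of the inflation $j_E$ under the isomorphism $P\cong F_{n-1}\oplus K_{n-2}$. Your route is longer but more self-contained: it uses only axiom (4), the standard fact that pushouts of conflations are conflations, and the elementary splitting of a conflation whose inflation admits a retraction, in place of the citation to \cite{exKeller}. Both proofs work.

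One caveat on the easy components: the blanket principle ``every split monomorphism is an inflation'' is false in a general exact category --- it is equivalent to weak idempotent completeness, which is not assumed here. What actually saves the step (and what the paper is implicitly recording when it names the cokernels $F_0$ and $F_i\oplus E_{i-1}$) is that each $\hat\beta_i$, $0\le i\le n-2$, is not merely retracted but becomes the canonical inclusion of the summand $E_i$ after composing with an explicit automorphism of $\hat F_i$ that subtracts $\beta_i$ and $\phi_{i-1}$ off the other coordinates; a canonical inclusion into a finite direct sum is an inflation because split exact sequences are conflations, and inflations are stable under composition with isomorphisms. Phrase it that way and the step is airtight; as literally written, it appeals to a false general statement.
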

\begin{proof} It is easy to see that $\hat\beta_0$ is a split monomorphism with the cokernel $F_0$ and $\hat\beta_i$ ($1\le i\le n-2$) is a split monomorphism with the cokernel $F_i\oplus E_{i-1}$. Thus, it remains to prove that $\hat\beta_{n-1}$ is an inflation.
To do this, let us first present $\phi_{n-2}$ and $\psi_{n-2}$ in the form $\phi_{n-2}=\iota_\phi\pi_\phi$ and $\psi_{n-2}=\iota_\psi\pi_\psi$, where $Y\xrightarrow{\iota_E}E_{n-1}\xrightarrow{\pi_\phi}K_\phi$ and $Y\xrightarrow{\iota_F}F_{n-1}\xrightarrow{\pi_\psi}K_\psi$ are conflations. We have $\hat\beta_{n-1}=\begin{pmatrix}\beta_{n-1}\\ \phi_{n-2}\end{pmatrix}=\begin{pmatrix}1_{F_{n-1}}&0\\0& \iota_\phi\end{pmatrix}\begin{pmatrix}\beta_{n-1}\\ \pi_\phi\end{pmatrix}$. Note that $\begin{pmatrix}1_{F_{n-1}}&0\\0& \iota_\phi\end{pmatrix}$ is an inflation, for example, as a pushout of the inflation $\iota_\phi$ along the direct inclusion of $K_\phi$ to $F_{n-1}\oplus K_\phi$, and hence it remains to prove that $\begin{pmatrix}\beta_{n-1}\\ \pi_\phi\end{pmatrix}$ is an inflation.

Note that by the cokernel universal property there exists $\gamma:K_\phi\rightarrow K_\psi$ such that $\gamma \pi_\phi=\pi_\psi\beta_{n-1}$. Then $(1_Y,\beta_{n-1},\gamma)$ is a morphism of short exact sequences, and hence the square
\begin{tikzpicture}[node distance=0.8cm]
 \node(A) {\tiny$E_{n-1}$};
 \node(Ar) [right  of=A] {};
 \node(B) [right  of=Ar] {\tiny$K_\phi$};
\node(C) [below  of=A] {\tiny$F_{n-1}$};
\node(D) [below  of=B] {\tiny$K_\psi$};

\draw [->,>=stealth'] (A)  to node[above]{\tiny$\pi_\phi$ } (B) ; 
\draw [->,>=stealth'] (A)  to node[left]{\tiny$\beta_{n-1}$ } (C) ; 
\draw [->,>=stealth'] (B)  to node[left]{\tiny$\gamma$ } (D) ; 
\draw [->,>=stealth'] (C)  to node[above]{\tiny$\pi_\psi$ } (D) ;
\end{tikzpicture}
is a pullback of the deflation $\pi_\psi$. Now it follows from \cite{exKeller} that $\begin{pmatrix}\beta_{n-1}\\ \pi_\phi\end{pmatrix}$ is an inflation and we are done.
\end{proof}

\begin{cor}\label{monoRek} For any exact category $\mathcal{C}$ there exists an isomorphism $\gamma:\Ext_{\mathcal{C}}^{n-1}(X,Y)\cong \pi_1\cExt_{\mathcal{C}}^n(X,Y)$, explicitly constructed in \cite[\S3.1]{Hermann2}.
\end{cor}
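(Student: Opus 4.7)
The proof is essentially immediate from the preceding lemma together with Hermann's work, so the ``proof proposal'' is just an explanation of how to assemble the two ingredients.

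The plan is to cite \cite{Hermann2} for the existence and explicit description of the isomorphism $\gamma$ in the factorizing case, and then invoke the previous lemma to discharge the factorizing hypothesis. Concretely, in \cite{Hermann2} Hermann constructs $\gamma:\Ext_{\mathcal{C}}^{n-1}(X,Y)\to\pi_1\cExt_{\mathcal{C}}^{n}(X,Y)$ by representing a class in $\pi_1\cExt_{\mathcal{C}}^n(X,Y)$ as a loop through extensions connected by zig-zags of morphisms $\hat\beta$; the fact that each component of $\hat\beta$ is an inflation is used precisely to guarantee that these zig-zags can be rectified and that the resulting map on homotopy groups is well defined and bijective.

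The only hypothesis imposed in \cite{Hermann2} beyond ordinary exactness is that $\mathcal{C}$ be factorizing, and the preceding lemma shows that every exact category already satisfies this condition. Therefore Hermann's construction and all verifications carry over verbatim to an arbitrary exact category $\mathcal{C}$, producing the isomorphism $\gamma$ claimed in the corollary.

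No additional argument is required, and there is no substantial obstacle: the work of extending Hermann's result to arbitrary exact categories has been absorbed entirely into the previous lemma.
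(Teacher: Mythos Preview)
Your proposal is correct and matches the paper's approach exactly: the corollary is stated without proof in the paper, since it follows immediately from the preceding lemma (every exact category is factorizing) together with Hermann's construction of $\gamma$ for factorizing exact categories.
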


The isomorphism of Corollary \ref{monoRek} was used by Hermann \cite[\S5.2]{Hermann2} to define the Gerstenhaber bracket on the extension algebra of the unit of an exact monoidal category. It was first constructed explicitly by Schwede \cite[\S2]{Schwede} for any category of modules. Hermann showed that for a module category his construction coincides up to a sign with that of Schwede, and hence the bracket on Hochschild cohomology constructed by Hermann coincides with the usual Gerstenhaber bracket. The construction of the required isomorphism was done in \cite[Theorem 3.1]{Schwede} using projective resolutions and for this reason is more appropriate for us. Now we will  show that if $X$ has a projective resolution, then Schwede's isomorphism coincides up to a sign with Hermann's isomorphism, generalizing \cite[Theorem 5.3.2]{Hermann2} to monoidal categories with enough projectives. 

Let us first adapt Schwede's construction to the setting of an arbitrary exact category to construct the isomorphism
\begin{equation}\label{eqn:mu}
  \mu : \Ext^{n-1}_\C(X, Y )\rightarrow \pi_1\cExt^n_\C(X, Y )
\end{equation}
in the case where $X$ has a projective resolution $(P, d, \mu_P )$. Let us fix some $n$-cocycle $f : P\rightarrow Y$ and define $K(f)\in \cExt^n_\C(X, Y )$ as in \eqref{Kf}. Note that any element of $\Ext^n_\C(X, Y )$ can be represented by $K(f)$ for some $n$-cocycle $f$.
Let now $g : P\rightarrow Y$ be an $(n - 1)$-cocycle. The pushout universal property ensures existence of a unique morphism $h : K(f)_{n-1}\rightarrow K(f)_{n-1}$ such that $h\theta_f = \theta_f - \iota_fg$ and $h\iota_f = \iota_f$.
This gives the morphism of $n$-extensions 
$$\mu_f (g) : K(f)\rightarrow K(f)$$ 
that is the identity in all degrees except $(n - 1)$ where it equals $h$. The morphism $\mu_f (g)$ determines an element of $\pi_1\cExt^n_\C(X, Y )$.
The homotopy class of $\mu_f (g)$ is determined by the cohomology class of $g$. This follows from \cite[Lemma 3.2.4]{Hermann2} because, for a degree $(n- 2)$ morphism $p : P\rightarrow Y$ , the degree $-1$ morphism from $K(f)$ to $K(f)$
that equals zero in all degrees except $(n-2)$, where it equals $\iota_f p$, is a homotopy between $\mu_f (g)$ and $\mu_f (g+pd)$.
Moreover, it is not difficult to see that $\mu_f(g_1 + g_2) = \mu_f (g_2) \circ \mu_f (g_1)$, and hence the image of $\mu_f (-)$ is an
abelian subgroup of $\pi_1\big(\cExt^n_\C(X, Y ),K(f)\big)$. A little later we will show that $\mu_f(-)$ is an isomorphism, which will ensure that $\pi_1\cExt^n_\C(X, Y )$ does not depend (up to unique isomorphism) on a
point in a connected component. Moreover, our arguments will imply that this unique isomorphism sends $\mu_{f_1}(g)$ to $\mu_{f_2}(g)$ if $f_1$ and $f_2$ are cohomologous.
For now we choose for each point $E \in \cExt^n_\C(X, Y )$ a morphism of resolutions $\hat f:P\rightarrow E$ and define $$\mu_E(g):E\rightarrow E$$ to be the conjugation of
$\mu_f(g)$, where $f = \pi_E\hat f$, by the path corresponding to the morphism from $K(f)$ to $E$ induced by $\hat f$ (not caring about the dependence of $\mu_E(g)$ on the choice of $\hat f$).

Suppose now that we have two morphisms $\alpha,\beta:K(f)\rightarrow E$ for some $(E,\phi,\mu_E)\in\cExt_{\mathcal{C}}^{n}(X,Y)$. We will show how one can recover an $(n-1)$-cocycle $g$ such that $\mu_f(g)$ is homotopic to the loop $\alpha^{-1}\beta$. Note that in fact any loop with the base point $K(f)$ can be put into this form due to the results of \cite{Hermann2,Schwede} and so we will be able to recover a preimage of any loop. Our construction imitates, of course, the construction of Schwede, but we give it for convenience, because our settings are more general.
Note that $K(f)$ comes with a canonical morphism of resolutions $\Phi_f:P\rightarrow K(f)$ defined by the equalities $(\Phi_f)_i=1_{P_i}$ for $0\le i\le n-2$, $(\Phi_f)_{n-1}=\theta_f$ and $(\Phi_f)_{n}=f$. Then $(\alpha-\beta)\Phi_f$ is a chain map that is annihilated by the quasi isomorphism $\mu_E$. Thus, this map is null homotopic, i.e.\ there is a degree $-1$ morphism $$s:P\rightarrow E$$ such that $(\alpha-\beta)\Phi_f=\phi s+sd$. Note that $\pi_Esd=0$, and hence $s_{n-1}=\pi_Es:P\rightarrow Y$ is an $(n-1)$-cocycle.

\begin{lemma}\label{preim} The loops $\mu_f(s_{n-1})$ and $\alpha^{-1}\beta$ are homotopic.
\end{lemma}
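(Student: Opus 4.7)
The plan is to reduce the lemma to a chain-homotopy computation. Specifically, I will construct a degree $-1$ morphism $\tilde s\colon K(f)\to E$ that witnesses the identity $\alpha\circ\mu_f(s_{n-1})-\beta=\partial(\tilde s)$, and then combine this with the commutative triangle $K(f)\xrightarrow{\mu_f(s_{n-1})}K(f)\xrightarrow{\alpha}E$ and Hermann's Lemma 3.2.4 to conclude that the loops agree.

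I define $\tilde s$ by $\tilde s_i=s_i$ for $0\le i\le n-2$ and $\tilde s_i=0$ for $i=n-1,n$, and check $\partial(\tilde s)=\alpha\mu_f(s_{n-1})-\beta$ degree by degree. In degrees $i\le n-2$ the equality reduces to $(\alpha-\beta)_i=\phi_i s_i+s_{i-1}d_{i-1}$, which is the degree-$i$ component of the hypothesis $(\alpha-\beta)\Phi_f=\phi s+sd$ (using $(\Phi_f)_i=1$ and $\mu_f(s_{n-1})_i=1$). In degree $n$ both sides vanish, since $\alpha_n=\beta_n=1_Y$ and $\tilde s_n=0$.

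The substantive computation is in degree $n-1$, where $K(f)_{n-1}$ is a pushout, so the identity has to be checked after precomposition with $\theta_f$ and with $\iota_f$ separately. Combining $h\theta_f=\theta_f-\iota_f s_{n-1}$ and $h\iota_f=\iota_f$ with the chain-map condition $\alpha_{n-1}\iota_f=\iota_E=\beta_{n-1}\iota_f$ and the degree-$(n-1)$ component of the hypothesis $(\alpha_{n-1}-\beta_{n-1})\theta_f=\iota_E s_{n-1}+s_{n-2}d_{n-2}$, a short calculation yields $(\alpha_{n-1}h-\beta_{n-1})\theta_f=s_{n-2}d_{n-2}$ and $(\alpha_{n-1}h-\beta_{n-1})\iota_f=0$. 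Since $d_f\theta_f=d_{n-2}$ and $d_f\iota_f=0$, these match $(\partial\tilde s)_{n-1}=s_{n-2}d_f$ on both legs of the pushout.

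With the chain homotopy in hand, Hermann's Lemma 3.2.4 (used exactly as in the paragraph defining $\mu_f$, where it yields $\mu_f(g)\sim\mu_f(g+pd)$) shows that the edges corresponding to $\alpha\mu_f(s_{n-1})$ and to $\beta$ represent the same element in $\pi_1\cExt^n_\C(X,Y)$. Combining with the 2-simplex witnessing $\alpha\mu_f(s_{n-1})=\alpha\circ\mu_f(s_{n-1})$, the loop $\alpha^{-1}\beta$ becomes homotopic to $\alpha^{-1}\alpha\mu_f(s_{n-1})=\mu_f(s_{n-1})$, as required. The main obstacle I anticipate is verifying that Hermann's Lemma 3.2.4 applies in the generality needed here, namely to a chain homotopy between two morphisms from $K(f)$ into a general $n$-extension $E$ (rather than between two endomorphisms of $K(f)$); if this is not immediate from the statement of that lemma, one proceeds by interpolating through an intermediate $n$-extension built directly from $\tilde s$ in order to realize the required path-homotopy by explicit morphisms of $n$-extensions.
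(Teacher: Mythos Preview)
Your argument is correct and is essentially identical to the paper's proof. The paper defines $\beta'$ by $\beta'_i=\beta_i+\phi_i s_i+s_{i-1}d_{i-1}$ for $0\le i\le n-2$, $\beta'_{n-1}=\beta_{n-1}+s_{n-2}d_f$, $\beta'_n=\beta_n$, invokes \cite[Lemma~3.2.4]{Hermann2} to conclude $\beta'\sim\beta$, and then checks $\alpha\mu_f(s_{n-1})=\beta'$; this is exactly your identity $\alpha\mu_f(s_{n-1})-\beta=\partial(\tilde s)$ rewritten, with your $\tilde s$ being the homotopy implicit in the paper's appeal to Hermann's lemma. Your concern about the scope of \cite[Lemma~3.2.4]{Hermann2} is moot: the paper applies it in precisely the same generality (to chain-homotopic morphisms $K(f)\to E$), so no additional interpolation is needed.
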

\begin{proof} Let us first replace $\beta$ by $\beta'$, where $\beta'_i=\beta_i+\phi_i s_i+s_{i-1}d_{i-1}$ for $0\le i\le n-2$,  $\beta'_{n-1}=\beta_{n-1}+s_{n-2}d_f$ and $\beta'_n=\beta_n$. Then the paths corresponding to $\beta'$ and $\beta$ are homotopic by \cite[Lemma 3.2.4]{Hermann2}. It remains to note that $\alpha\mu_f(s_{n-1})=\beta'$.
\end{proof}

Let us now recall Hermann's construction of the isomorphism 
$$\gamma:\Ext_{\mathcal{C}}^{n-1}(X,Y)\rightarrow \pi_1\cExt_{\mathcal{C}}^n(X,Y)
. $$ 
For $(F,\psi,\mu_F)\in\cExt_{\mathcal{C}}^{n-1}(X,Y)$, let us first construct a loop with a base point in the $n$-extension $\sigma_n(X,Y)$ defined by \eqref{sigma}. We  denote by $\bar F$  the $n$-extension
\[
 0\rightarrow Y \xrightarrow{\iota_F} F_{n-2}\xrightarrow{\psi_{n-3}}\cdots\xrightarrow{\psi_0} F_0\xrightarrow{\scriptsize\begin{pmatrix}\mu_F\\-\mu_F\end{pmatrix}} X^2
\]
with $\mu_{\bar F}=\begin{pmatrix}1_X&1_X\end{pmatrix}$.
There are morphisms of $n$-extensions $\alpha^F,\beta^F:\sigma_n(X,Y)\rightarrow \bar F$ both of which are equal to $\iota_F$ in degree $(n-1)$ and zero in degrees from $1$ to $(n-2)$. 
In degree zero, $\alpha^F$ equals $\begin{pmatrix}1_X\\0\end{pmatrix}$ while $\beta^F$ equals $\begin{pmatrix}0\\1_X\end{pmatrix}$. These morphisms determine the loop $(\alpha^F)^{-1}\beta^F$ that we denote by $\gamma_{\sigma_n(X,Y)}(F)$.
Now, for an arbitrary $E\in\cExt_{\mathcal{C}}^n(X,Y)$, the loop 
$$\gamma_{E}(F)\in \pi_1\cExt_{\C}^{n-1}(X,Y)$$
is obtained from the loop $\gamma_{\sigma_n(X,Y)}(F)$ by applying the functor $(-)+E$, where the plus sign denotes the Baer sum of extensions.
Since $\sigma_n(X,Y)+E=E$, we get a loop with the base point $E$. See \cite[\S3.1]{Hermann2} for details.
Hermann has shown that this construction indeed determines an isomorphism $\gamma:\Ext_{\mathcal{C}}^{n-1}(X,Y)\rightarrow \pi_1\cExt_{\mathcal{C}}^n(X,Y)$. We will show that up to a sign, the constructions of Schwede and of Hermann give the same result. This, in particular, will ensure that Schwede's construction gives a well defined isomorphism between $\Ext_{\mathcal{C}}^{n-1}(X,Y)$ and $\pi_1\cExt_{\mathcal{C}}^n(X,Y)$ in our context and will allow us to use this isomorphism 
for studying the bracket as introduced in \cite[\S5.2]{Hermann2}.

Our aim is to prove that $\mu_E(F)\sim \gamma_E\big((-1)^{n+1}F\big)$ for any $F\in\cExt_{\mathcal{C}}^{n-1}(X,Y)$ and $E\in\cExt_{\mathcal{C}}^n(X,Y)$.
Let us pick a morphism of resolutions $\hat f:P\rightarrow E$ and denote by $\bar f:K(f)\rightarrow E$ the morphism induced by it, where $f=\pi_E\hat f$.
Note that \[(u+1_E)\bar f=(u+\bar f)=(1_{\bar F}+\bar f)(u+1_{K(f)})\] for each $u\in\{\alpha^F,\beta^F\}$, and hence, by Hermann's definition, we have
\begin{multline*}
\gamma_E(F)=(\alpha^F+1_E)^{-1}(\beta^F+1_E)\sim\bar f(\alpha^F+1_{K(f)})^{-1}(1_{\bar F}+\bar f)^{-1}(1_{\bar F}+\bar f)(\beta^F+1_{K(f)})\bar f^{-1}\\
\sim \bar f(\alpha^F+1_{K(f)})^{-1}(\beta^F+1_{K(f)})\bar f^{-1}=\bar f \gamma_{K(f)}\big(F\big)\bar f^{-1}.
\end{multline*}
Thus, the required equality follows from the definition of $\mu$, our arguments above and the next lemma.

\begin{lemma}\label{SchHer} $\mu_f\big((-1)^{n+1}g\big)\sim \gamma_{K(f)}\big(K(g)\big)$ for all $n$-cocycles $f$ and $(n-1)$-cocycles $g$.
\end{lemma}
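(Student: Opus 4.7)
The plan is to apply Lemma~\ref{preim} to the loop $\gamma_{K(f)}(K(g))$. Concretely, we express this loop as $\tilde\alpha^{-1}\tilde\beta$ for two morphisms $\tilde\alpha,\tilde\beta:K(f)\to E$ in $\cExt^n_\C(X,Y)$, produce a null-homotopy $s:P\to E$ of $(\tilde\alpha-\tilde\beta)\Phi_f$, and verify that the resulting cocycle $\pi_E s$ equals $(-1)^{n+1}g$; Lemma~\ref{preim} then gives $\mu_f((-1)^{n+1}g)\sim\tilde\alpha^{-1}\tilde\beta=\gamma_{K(f)}(K(g))$, which is the desired conclusion.

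Set $\tilde\alpha:=\alpha^{K(g)}+1_{K(f)}$ and $\tilde\beta:=\beta^{K(g)}+1_{K(f)}$, so by Hermann's construction $\gamma_{K(f)}(K(g))=\tilde\alpha^{-1}\tilde\beta$. A direct check shows that $\sigma_n(X,Y)+K(f)=K(f)$: the pullback and pushout defining the Baer sum collapse the $\sigma_n$ factor because of its trivial middle terms. The target $E:=\overline{K(g)}+K(f)$ admits the following degreewise description: $E_n=Y$; $E_{n-1}$ is the pushout of $\iota_g$ and $\iota_f$ along $Y$, identifying $\iota_g(y)$ with $\iota_f(y)$ for all $y\in Y$; in degrees $1\le i\le n-2$ the direct sum gives $E_i=P_{i-1}\oplus P_i$; and $E_0$ is the pullback of the codiagonal $X^2\to X$ along $\mu_P$, that is, the subobject $\{(a,b,p)\in X^2\oplus P_0 : a+b=\mu_P p\}$. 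Under this description $\tilde\alpha$ and $\tilde\beta$ agree in every positive degree, and at level $0$ their difference sends $p\in P_0$ to $(\mu_P p,-\mu_P p,0)\in E_0$.

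Composing with $\Phi_f$ gives a chain map $(\tilde\alpha-\tilde\beta)\Phi_f:P\to E$ that vanishes in positive degrees and sends $p\in P_0$ to $(\mu_P p,-\mu_P p,0)$. We build the null-homotopy by a telescoping formula with alternating signs: set $s_i(p)=((-1)^i p,0)\in P_i\oplus P_{i+1}=E_{i+1}$ for $0\le i\le n-3$, $s_{n-2}(p)=(-1)^{n-2}[(\theta_g p,0)]\in E_{n-1}$, and $s_{n-1}(p)=(-1)^{n-1}g(p)\in E_n=Y$. Verifying $\partial s=(\tilde\alpha-\tilde\beta)\Phi_f$ at each level uses: the description of the degree $1$-to-$0$ differential of $E$ at level $0$; cancellation of alternating signs in the middle degrees; the pushout identity $\theta_g\,d=\iota_g\,g$ at level $n-1$ (immediate from the pushout construction of $K(g)_{n-2}$); and the cocycle relation $gd=0$ at level $n$. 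Then $\pi_E s=s_{n-1}=(-1)^{n-1}g=(-1)^{n+1}g$, so Lemma~\ref{preim} finishes the proof.

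The main difficulty is writing out the Baer sum $E$ concretely enough to guide the construction of $s$ and keeping track of signs across the boundary transitions between the direct-sum middle layers, the pushout $E_{n-1}$, and the top copy of $Y$. The sign $(-1)^{n+1}$ is forced precisely by this alternating-sign telescoping from degree $0$ up to degree $n-1$.
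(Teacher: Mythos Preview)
Your proof is correct and follows essentially the same strategy as the paper's: express the loop as $(\alpha^{K(g)}+1_{K(f)})^{-1}(\beta^{K(g)}+1_{K(f)})$, compute $\overline{K(g)}+K(f)$ explicitly, observe that the difference composed with $\Phi_f$ is supported in degree~$0$, and write down the same alternating-sign telescoping homotopy $s$ (your $s_{n-2}=(-1)^{n-2}[\theta_g,0]$ and $s_{n-1}=(-1)^{n-1}g$ are the paper's $(-1)^n\pi\begin{pmatrix}\theta_g\\0\end{pmatrix}$ and $(-1)^{n+1}g$). The only cosmetic differences are that the paper presents the pullback $E_0$ as $X\oplus P_0$ via an explicit isomorphism rather than as the subobject $\{(a,b,p):a+b=\mu_P p\}$, and that your element-wise notation should, strictly speaking, be read as shorthand for morphisms in an abstract exact category.
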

\begin{proof} We first describe the loop $\gamma_{K(f)}\big(K(g)\big)=(\alpha^{K(g)}+1_{K(f)})^{-1}(\beta^{K(g)}+1_{K(f)})$. To do this we need to compute the extension $\overline{K(g)}+K(f)$ and morphisms \[\alpha^{K(g)}+1_{K(f)}, \ \ \beta^{K(g)}+1_{K(f)}:K(f)\rightarrow \overline{K(g)}+K(f).\]
These can be obtained via a pullback-pushout construction from the morphisms of long exact sequences

\[\tiny
\begin{xy}*!C\xybox{
\xymatrix{
Y\oplus Y
  \ar[r]^{\hspace{-0.5cm}\begin{pmatrix}1_Y&0\\0&\iota_f\end{pmatrix}}
  \ar[d]^{=} 
& Y\oplus K(f)_{n-1}
  \ar[r]^{\begin{pmatrix}0&d_f\end{pmatrix}}
  \ar[d]^{\begin{pmatrix}\iota_g&0\\0&1_{K(f)_{n-1}}\end{pmatrix}} 
& P_{n-2}\ar[r]^{d_{n-3}}
  \ar[d]^{\begin{pmatrix}0\\1_{P_{n-2}}\end{pmatrix}}
&\cdots
  \ar[r]^{d_1}
&P_1
  \ar[d]^{\begin{pmatrix}0\\1_{P_{1}}\end{pmatrix}}
  \ar[r]^{\begin{pmatrix}0\\d_0\end{pmatrix}} 
& X\oplus P_0
  \ar[r]^{\begin{pmatrix}1_X&0\\0&\mu_P\end{pmatrix}}
  \ar[d]^{\delta} & X\oplus X\ar[d]^{=} \\
Y\oplus Y
  \ar[r]_{\hspace{-0.8cm}\vspace{.1cm}\begin{pmatrix}\iota_g&0\\0&\iota_f\end{pmatrix}} 
& K(g)_{n-2}\oplus K(f)_{n-1}
  \ar[r]_{\hspace{0.1cm}\vspace{.1cm}\begin{pmatrix}d_g&0\\0&d_f\end{pmatrix}} 
&P_{n-3}\oplus P_{n-2}
  \ar[r]_{\hspace{0.1cm}\vspace{.1cm}\begin{pmatrix}d_{n-4}&0\\0&d_{n-3}\end{pmatrix}}
&\cdots
  \ar[r]_{\vspace{.1cm}\begin{pmatrix}d_{0}&0\\0&d_{1}\end{pmatrix}} 
&P_0\oplus P_1
  \ar[r]_{\vspace{.1cm}\begin{pmatrix}\mu_P&0\\-\mu_P&0\\0&d_0\end{pmatrix}} 
&X\oplus X\oplus P_0
  \ar[r]_{\hspace{0.1cm}\vspace{.1cm}\begin{pmatrix}1_X&1_X&0\\0&0&\mu_P\end{pmatrix}} 
& X\oplus X
}}
\end{xy}
\]
where $\delta=\begin{pmatrix}1_X&0\\0&0\\0&1_{P_0}\end{pmatrix}$ for $\alpha^{K(g)}+1_{K(f)}$ and $\delta=\begin{pmatrix}0&0\\1_X&0\\0&1_{P_0}\end{pmatrix}$ for $\beta^{K(g)}+1_{K(f)}$. Let $K(g)_{n-2}\oplus K(f)_{n-1}\xrightarrow{\pi} L$ be the deflation completing the inflation $\begin{pmatrix}\iota_g\\-\iota_f\end{pmatrix}$ to a conflation. It is easy to see that the diagrams
\[\tiny
\begin{xy}*!C\xybox{
\xymatrix{
Y\oplus Y\ar[d]^{\begin{pmatrix}1_Y&1_Y\end{pmatrix}}\ar[rr]^{\hspace{-0.7cm}\begin{pmatrix}\iota_g&0\\0&\iota_f\end{pmatrix}} && K(g)_{n-2}\oplus K(f)_{n-1}\ar[d]^{\pi}\\
Y\ar[rr]_{\pi\begin{pmatrix}\iota_g\\0\end{pmatrix}} && L
}}
\end{xy}
\hspace{0.6cm}
\mbox{ \normalsize{and} }
\hspace{0.6cm}
\begin{xy}*!C\xybox{
\xymatrix{
X\oplus P_0\ar[rr]^{\begin{pmatrix}0&\mu_P\end{pmatrix}}\ar[dd]^{\begin{pmatrix}1_X&0\\-1_X&\mu_P\\0&1_{P_0}\end{pmatrix}} && X\ar[dd]^{\begin{pmatrix}1_X\\1_X\end{pmatrix}} \\
&\\
X\oplus X\oplus P_0\ar[rr]_{\hspace{0.2cm}\vspace{.1cm}\begin{pmatrix}1_X&1_X&0\\0&0&\mu_P\end{pmatrix}} && X\oplus X
}}
\end{xy}
\]
are a pushout and a pullback respectively. Then $\overline{K(g)}+K(f)$ is the $n$-extension
\[\tiny
\begin{xy}*!C\xybox{
\xymatrix{
Y\ar[r]^{\pi\begin{pmatrix}\iota_g\\0\end{pmatrix}} & L\ar[r]^{\hspace{-0.5cm}\overline{\begin{pmatrix}d_g&0\\0&d_f\end{pmatrix}}} &P_{n-3}\oplus P_{n-2}\ar[r]&\cdots\ar[r]&P_0\oplus P_1\ar[r]^{\begin{pmatrix}\mu_P&0\\0&d_0\end{pmatrix}}&X\oplus P_0\ar[r]^{\hspace{0.1cm}\begin{pmatrix}0&\mu_P\end{pmatrix}} & X .
}}
\end{xy}
\]
Moreover, the morphism $\Phi=\big((\alpha^{K(g)}+1_{K(f)})-(\beta^{K(g)}+1_{K(f)})\big)\Phi_f:P\rightarrow \overline{K(g)}+K(f)$ is zero in all degrees except degree zero where it equals $\begin{pmatrix}\mu_P\\0\end{pmatrix}$.
Let us define a morphism  $s:P\rightarrow \overline{K(g)}+K(f)$ of degree~$-1$ by the equalities $s_i=(-1)^i\begin{pmatrix}1_{P_i}\\0\end{pmatrix}$ for $0\le i\le n-3$, $s_{n-2}=(-1)^n\pi\begin{pmatrix}\theta_g\\0\end{pmatrix}$ and $s_{n-1}=(-1)^{n+1}g$.
It remains to note that $s$ is a homotopy for $\Phi$ and to apply Lemma \ref{preim}.
\end{proof}

\begin{remark} The proof of Lemma \ref{SchHer} can be obtained via an adaptation of the proof of \cite[Lemma 5.3.3]{Hermann2}, but we included our proof for convenience of the reader and because for us it seems to be more self-contained.
Two isomorphisms between $\Ext_{\mathcal{C}}^{n-1}(X,Y)$ and $\pi_1\cExt_{\mathcal{C}}^n(X,Y)$ were used in our proof of Lemma \ref{SchHer} (cf.\ 
\cite[Theorem 5.3.3]{Hermann2}). The second one $\gamma'$ satisfies the equality $\gamma'_E(F)=\gamma_E\big((-1)^{n+1}F\big)$ and so allows to exclude a sign from the isomorphism stated in the lemma. In fact, the proof of \cite[Lemma 5.3.3]{Hermann2} starts with passing from $\gamma$ to $\gamma'$ and the sign appears exactly at this moment. We do not know why $\gamma$ is used more in \cite{Hermann2}, but actually $\gamma$ works better with injective resolutions while $\gamma'$ is more appropriate for projective resolutions.
\end{remark}

\section{The Gerstenhaber bracket on the extension algebra of the unit}
\label{sec:Gb}

In this section we introduce our definition of the bracket on the extension algebra of the unit of an exact monoidal category when that unit has a projective power flat resolution. We then prove that, together with the Yoneda product, it gives a Gerstenhaber algebra structure. Our construction will be based on the $A_{\infty}$-coalgebra techniques of \cite{NVW}. This will allow us to obtain automatically all the desired properties, while formally the conditions required for the constructions of \cite{NVW} are redundant. Alternatively, one can
use directly the techniques of \cite{Volkov} (see also~\cite[Section~6.3]{W}) to define the bracket and then prove its properties by direct calculations using some weaker additional assumptions. In the next section we will show that under our assumptions the bracket defined in this paper coincides with the bracket introduced in \cite{Hermann2}. This allows us to prove in our setting some properties of the bracket that were left as open questions in \cite{Hermann2}.

We first recall the definition and some basic facts about monoidal categories and discuss some relations between exact and monoidal structures on a category that allow construction of the Gerstenhaber bracket on the extension algebra of the unit.

\begin{definition}\label{defn:moncat} 
Suppose that the additive category $\mathcal{C}$ is equipped with a functor $\otimes:\mathcal{C}\times \mathcal{C}\rightarrow \mathcal{C}$, a distinguished object $\one$ and natural isomorphisms of functors
$$
-\otimes (=\otimes \equiv)\xrightarrow{\alpha} (-\otimes =)\otimes \equiv;\hspace{1.5cm}\one\otimes -\xrightarrow{\lambda^l}\Id_{\mathcal{C}};\hspace{1.5cm}-\otimes \one\xrightarrow{\lambda^r}\Id_{\mathcal{C}}.
$$
The 6-tuple $(\mathcal{C},\otimes,\one,\alpha,\lambda^l,\lambda^r)$ is called a {\it monoidal category} if it satisfies the conditions
\begin{multline*}
1_X\otimes \lambda^l_Y=(\lambda^r_X\otimes 1_Y)\circ \alpha_{X,\one,Y}:X\otimes (\one\otimes Y)\rightarrow X\otimes Y;\\
(\alpha_{W,X,Y}\otimes 1_Z)\circ\alpha_{W,X\otimes Y,Z}\circ(1_W\otimes \alpha_{X,Y,Z})
=\alpha_{W,X, Y\otimes Z}\circ \alpha_{W\otimes X,Y,Z}:\\W\otimes\big(X\otimes(Y\otimes Z)\big)\rightarrow \big((W\otimes X)\otimes Y\big)\otimes Z
\end{multline*}
for any $X,Y,Z,W\in\mathcal{C}$ (see \cite[\S1.2]{Hermann2} for the definition illustrated with commutative diagrams). In this case $\otimes$ is called a {\it monoidal product} for $\mathcal{C}$ and $\one$ is the {\it unit} of $\otimes$.
\end{definition}

\begin{remark} Mac Lane's Coherence Theorem (see \cite{CWM}) states that any ``formal''\ diagram involving identity morphisms and isomorphisms $\alpha$, $\lambda^l$ and $\lambda^r$ commutes. Roughly speaking, this means that if we have a sequence $X_1,\dots, X_n$, where each $X_i$ is either the object $\one$ or a formal variable, and a sequence $Y_1,\dots,Y_m$ which is obtained from the first one via exclusion of objects $X_i$ that are equal to $\one$, then any two isomorphisms from $\big(X_1\otimes\cdots\otimes(X_{n-1}\otimes X_n)\cdots\big)$ to $\big(\cdots(Y_1\otimes Y_2)\otimes\cdots\otimes Y_m\big)$ formed by formally defined compositions of morphisms of one of the forms $1^{\otimes a}\otimes\alpha_{A,B,C}\otimes 1^{\otimes b}$, $1^{\otimes a}\otimes\lambda_{A}^l\otimes 1^{\otimes b}$ and $1^{\otimes a}\otimes\lambda_{A}^r\otimes 1^{\otimes b}$ are equal. In particular, $\one^{\ot r}$ is canonically isomorphic to $\one$ for any $r\ge 0$.
\end{remark}

Similarly to the exact category case, we will usually omit the notation $\otimes,\one,\alpha,\lambda^l,\lambda^r$ and call $\mathcal{C}$ a monoidal category meaning that there is some fixed monoidal category structure for it.

Suppose now that $\C$ is monoidal and exact at the same time. Let $(E,\phi)$ and $(F,\psi)$ be two complexes over $\C$. Suppose that either $\C$ admits arbitrary countable direct sums or $E$ and $F$ are {\it bounded below}, i.e. there exists $N\in\mathbb{Z}$ such that $E_i=F_i=0$ for $i<N$. We define their tensor product complex $(E\ot F,\phi\ot\psi)$ in the following way. We set $(E\ot F)_i=\oplus_{j+k=i}E_j\ot F_k$ and $(\phi\ot\psi)_{i-1}|_{E_j\ot F_k}=\phi_{j-1}\ot 1_{F_k}+(-1)^{j}(1_{E_j}\ot \psi_{k-1})$. Unfortunately, one cannot guarantee that $(E\ot F,\phi\ot\psi)$ is really a complex, because the definition of a monoidal category does not require bilinearity of the tensor product. This problem does not arise in the case where $\C$ is a {\it tensor category}, but in fact here it is enough to add the condition $0\ot X\cong 0$ for any object $X$ of $\C$, where $0$ is the zero object.
If $f:E\rightarrow E'$ is a degree $n$ morphism and $g:F\rightarrow F'$ is a degree $m$ morphism, then we define the degree $(n+m)$ morphism $f\ot g:E\ot F\rightarrow E'\ot F'$ by the equality $(f\ot g)_i|_{E_j\ot F_k}=(-1)^{mj}(f_j\ot g_k)$.
If we forget for some time that the tensor product complex does not have to be a complex, then our definitions turn the category of (bounded below) complexes over $\C$ into a monoidal category with the unit object $\one$.
Mac Lane's Coherence Theorem can be applied in this context and so we will always identify tensor products with different bracket arrangements and complexes $E$, $\one\ot E$ and $E\ot \one$ without a special mentioning. In  particular, the notation $E^{\ot r}$ makes sense for $r\ge 0$. Note also that all of our notation is justified in such a way that the Koszul sign convention can be applied, for example, $\del(f\ot g)=\del(f)\ot g+(-1)^nf\ot \del(g)$, etc.

Suppose now that $(E,\phi,\mu_E)$ is a resolution of $X$ and $(F,\psi,\mu_F)$ is a resolution of $Y$. Then the tensor complex $E\ot F$ is equipped with the morphism $\mu_E\ot\mu_F$ and one can ask if $(E\ot F,\phi\ot \psi,\mu_E\ot\mu_F)$ is a resolution of $X\ot Y$. Of course, in general, there is no reason that this should be true.

\begin{definition}\label{def:powerflat}
A resolution $(P,d,\mu_P)$ of $\one$ is called {\it $n$-power flat} if $(P^{\ot r},d^{\ot r},\mu_P^{\ot r})$ is a resolution of $\one$ for each $1\le r\le n$. If $P$ is $n$-power flat for each $n\ge 2$, then we say that $P$ is {\it power flat}.
\end{definition}

The main object of our study is the $\Ext$-algebra $\Ext^{\bu}_{\C} (\one,\one)$ of the unit of a category $\mathcal{C}$ that is exact and monoidal at the same time.
The assumption that we will need to obtain our results is that $\one$ has a projective power flat resolution $P$.

\begin{remark}
Note that in \cite[\S5.2]{Hermann2} the bracket was defined under the condition that, for any $(E,\phi,\mu_E)\in\cExt_{\mathcal{C}}^n(\one,\one)$ and $(F,\psi,\mu_F)\in\cExt_{\mathcal{C}}^m(\one,\one)$, $(E\ot F,\phi\ot\psi,\mu_E\ot\mu_F)$ is an element of $\cExt_{\mathcal{C}}^{n+m}(\one,\one)$. Applying this property to the powers of the $(N+3)$-extension
\[P(N)=\left(0\rightarrow\one\xrightarrow{1_{\one}}\one\xrightarrow{0}\Ker(d_{N-1})\hookrightarrow P_N\xrightarrow{d_{N-1}}\cdots\xrightarrow{d_1}P_1\xrightarrow{d_0}P_0\right)\]
with $\mu_{P(N)}=\mu_P$ for big enough $N$, one can see that the property assumed in \cite[\S5.2]{Hermann2} implies power flatness of {\it any} resolution of $\one$ and our proofs can be applied if $\one$ has a projective resolution.
\end{remark}

Let us now recall some definitions and facts of \cite{NVW} and adapt them to our context. The nice feature of this approach is that, due to Mac Lane's Coherence Theorem, the proofs from \cite{NVW} work without changes and we automatically have a Gerstenhaber algebra structure on $\Ext^{\bu}_{\C} (\one,\one)$. This was difficult to do using the approach of \cite{Hermann2} and was left as a question there (see \cite[Question 5.2.15]{Hermann2}).
In the next sections, we will show that our approach and the approach of \cite{Hermann2}, in the cases where both of them can be applied, give the same operation on $\Ext^{\bu}_{\C} (\one,\one)$ up to a sign. Since the proofs of theorems stated in the remaining part of this section do not differ from the proofs given in \cite{NVW}, we leave all of them to the reader.

\begin{definition} An {\it $A_\infty$-coalgebra} over the exact monoidal category $\C$ is a (bounded below) complex $(C,0)$ with a collection of degree one morphisms $\delta_n:C\rightarrow C^{\otimes n}$, for all $n\ge 1$, such that, for any $N\ge 1$, 
\begin{equation}\label{Ainf}
0=\sum\limits_{r+s+t=N}(1_C^{\otimes r}\otimes\delta_s\otimes 1_C^{\otimes t})\delta_{r+t+1}.
\end{equation}
A degree one map $\mu:C\rightarrow \one$ is called a {\it weak counit} of the $A_\infty$-coalgebra $C$ if $(\mu\otimes\mu)\delta_2=\mu$ and $\mu^{\otimes n}\delta_n=0$ for all $n>2$.
\end{definition}

\begin{remark} Note that formally the targets of the morphisms $(1_C^{\otimes r}\otimes\delta_s\otimes 1_C^{\otimes t})\delta_{r+t+1}$  can be different, but there exist isomorphisms $\phi_{r,s,t}$ that can be expressed as compositions of isomorphisms of the form $1_C^{\otimes a}\otimes \alpha_{X,Y,Z}\otimes 1_C^{\otimes b}$ such that all morphisms $\phi_{r,s,t}(1_C^{\otimes r}\otimes\delta_s\otimes 1_C^{\otimes t})\delta_{r+t+1}$ make sense and have the same target. Moreover, Mac Lane's Coherence Theorem guarantees that the  isomorphism $\phi_{r,s,t}$ does not depend on a concrete choice of composed isomorphisms and their order. This is the reason why \eqref{Ainf} makes sense. In fact, this is an example of an identification of tensor products with different bracket arrangements.
\end{remark}

Suppose that $C$ is an  $A_\infty$-coalgebra as in the definition.
Let $f=(f_n)_{n\ge 0}$ and $g=(g_n)_{n\ge 0}$ be two sequences of morphisms, where, for each $n\ge 0$, the morphism $f_n:C\to C^{\otimes n}$ has degree $l$ and the morphism $g_n:C\to C^{\otimes n}$ has degree $k$. Then we define
$f\circ g=\big((f\circ g)_n\big)_{n\ge 0}$ by the equality

$$
(f\circ g)_n= \sum_{r+s+t=n} (1_C^{\otimes r}\otimes f_{s}\otimes 1_C^{\otimes t})g_{r+t+1} 
$$
and set $[f,g]=f\circ g-(-1)^{kl}g\circ f$. Note that $\delta=(\delta_n)_{n\ge 0}$ with $\delta_0=0$ is a sequence of degree one morphisms satisfying the equality $\delta\circ\delta=0$. For $f$ and $g$ as above, we define also $f\smile g=\big((f\smile g)_n\big)_{n\ge 0}$ by the equality (recall $k$ is the degree of $g$)
$$
(f\smile g)_n= (-1)^k\sum\limits_{r+s+t+u+v=n}
(1^{\ot r}\ot f_s\ot 1^{\ot t}\ot g_u\ot 1^{\ot v})\delta_{r+t+v+2} .
$$

\begin{definition}\label{CDinf}
Let $C$ be an $A_\infty$-coalgebra over $\C$.  A {\it degree $l$ $A_\infty$-coderivation} $f:C\to C$ is defined as a sequence of degree $l$ maps $f_n:C\to C^{\otimes n}$, for $n\geq 0$, that satisfy the equality $[f,\delta]=0$. The degree $l$ $A_\infty$-coderivation $f$ is called {\it inner} if there exists a sequence of degree $(l-1)$ maps $g_n:C\to C^{\otimes n}$, for all $n\geq 0$, such that $f=[g,\delta]$. We will denote by $\Coder^{\infty}_\C(C)$ and $\Inn^{\infty}_\C(C)$ the set of $A_\infty$-coderivations and the set of inner $A_\infty$-coderivations on the object $C$ respectively.
\end{definition}

Now we can reformulate \cite[Theorem 2.4.7]{NVW} in our setting.

\begin{theorem}\label{thm:NVW}
  If $(C,\delta)$ is an $A_{\infty}$-coalgebra over the monoidal category $\C$, then $\Inn^{\infty}_\C(C)$ is an ideal in $\Coder^{\infty}_\C(C)$ with respect to the operations $\smile$ and $[ \ , \ ]$. Moreover, $\Big(\big(\Coder^{\infty}_\C(C)/\Inn^{\infty}_\C(C)\big)[1],\ \smile, \ [ \ , \ ]\Big)$ is a Gerstenhaber algebra (in general, nonunital).
\end{theorem}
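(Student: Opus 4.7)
The plan is to follow the strategy of~\cite[Theorem 2.4.7]{NVW} essentially verbatim, using Mac Lane's Coherence Theorem as the sole adaptation required to transport the arguments from the bimodule setting to our exact monoidal setting. Every calculation in~\cite{NVW} is a manipulation of formal sums of terms of the shape $(1^{\ot r}\ot * \ot 1^{\ot t})\delta_{r+t+1}$ and $(1^{\ot r}\ot f_s\ot 1^{\ot t}\ot g_u\ot 1^{\ot v})\delta_{r+t+v+2}$, and it relies only on three ingredients: (i) the $A_\infty$-coalgebra relation $\delta\circ\delta=0$; (ii) the Koszul sign rule for tensor products of morphisms; and (iii) the freedom to silently identify tensor products with differing bracket arrangements and to absorb $\one$-factors. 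Item (iii) is precisely what Coherence supplies, canonically and without sign ambiguity, so every identity in~\cite{NVW} transcribes directly.

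First I would check that $\Coder^{\infty}_{\C}(C)$ is closed under both $\smile$ and $[\ ,\ ]$. For coderivations $f$ and $g$, a reindexing of the multi-sums shows $[f\smile g,\delta]=0$ and $[[f,g],\delta]=0$; the graded Jacobi identity for $\circ$, a formal consequence of its graded pre-Lie structure, drives the bracket computation, while for the cup product the hypotheses $[f,\delta]=0=[g,\delta]$ together with $\delta\circ\delta=0$ cancel all residual terms. Next, I would verify that $\Inn^{\infty}_{\C}(C)$ is a two-sided ideal for both operations. Taking $g=[h,\delta]$ inner and $f$ a coderivation, graded Jacobi yields $[f,[h,\delta]]=[[f,h],\delta]\pm[[f,\delta],h]$, and the second summand vanishes, so $[f,g]$ is inner. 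For the cup product, a short direct manipulation produces an explicit cochain $\ell$ with $f\smile g=[\ell,\delta]$, using again only $[f,\delta]=0$ and $\delta\circ\delta=0$; the symmetric case $g\smile f$ is identical.

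Finally, I would deduce the Gerstenhaber algebra axioms on the quotient $\big(\Coder^{\infty}_{\C}(C)/\Inn^{\infty}_{\C}(C)\big)[1]$. The graded Jacobi identity for $[\ ,\ ]$ and the Leibniz-type compatibility relating $[\ ,\ ]$ with $\smile$ hold as identities of sequences of morphisms, up to explicit $[\,\cdot\,,\delta]$-boundaries that vanish in the quotient. Associativity and graded commutativity of $\smile$ modulo inner coderivations are the more delicate relations: each is proved by exhibiting an explicit homotopy $\eta$ such that the relevant difference equals $[\eta,\delta]$. The main obstacle is bookkeeping, namely keeping the multi-index sums and Koszul signs consistent while systematically invoking Coherence to treat differently-bracketed tensor products as equal; since every such identity has already been verified in~\cite{NVW}, the task here reduces to confirming that no step of those proofs implicitly depends on features of the bimodule setting beyond ingredients (i)--(iii) above.
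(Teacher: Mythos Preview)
Your proposal is correct and is essentially identical to the paper's own approach: the paper explicitly states that the proofs from~\cite{NVW} work without change thanks to Mac Lane's Coherence Theorem, and leaves the details to the reader. Your outline in fact goes further than the paper does, by sketching which identities require which ingredients, but the underlying strategy is the same.
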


\begin{remark}
  Consider the category of bimodules $\C$ of an algebra $A$,
  with tensor product over $A$.
  Take $C$ to be a projective bimodule resolution of $A$ with
  $A_\infty$-coalgebra structure as described in~\cite[Theorem 3.1.1]{NVW}
  (see also Theorem~\ref{GerIso} below).
  Theorem 4.1.1 of~\cite{NVW} states that 
  the Gerstenhaber algebra of Theorem~\ref{thm:NVW} above is precisely the
  Hochschild cohomology algebra of $A$.
  See Theorem~\ref{GerIso} below
  for a restatement in our exact monoidal category setting.
    \end{remark}

Suppose now that $\C$ is an exact monoidal category and $(P,d,\mu_P)$ is a projective power flat resolution of $\one$.
Then there exists a morphism of resolutions $\Delta_P:P\rightarrow P\ot P$.

\begin{remark}\label{Dchoice} In our calculations it will be convenient to justify the choice of $\Delta_P$. Namely, let us introduce $\alpha_P=\lambda^l_P(\mu_P\otimes 1_P)\Delta_P$ and $\beta_P=\lambda^r_P(1_P\otimes \mu_P)\Delta_P$. Then the map $\Delta_P':P\rightarrow P\ot P$ defined by the equality $\Delta_P'=(\alpha_P\otimes 1_P-\beta_P\otimes 1_P)\Delta_P+\Delta_P\beta_P$ is also a  morphism of resolutions that additionally satisfies the equality $\lambda^l_P(\mu_P\otimes 1_P)\Delta_P'=\lambda^r_P(1_P\otimes \mu_P)\Delta_P'$.
\end{remark}

Now \cite[Theorem 3.1.1]{NVW} (see also \cite[Proposition 5.3]{LowVan}) can be transferred to our setting.

\begin{theorem}\label{Pinfc}
  Let $\C$ be an exact monoidal category for which the unit object $\one$
  has a projective power flat resolution $(P,d,\mu_P)$.
The complex $(P[-1],0)$ admits an $A_{\infty}$-coalgebra structure $\delta$ with $\delta_1=d$ and $\delta_2=\Delta_P$ such that $\mu_P$ is a weak counit for $(P[-1],\delta)$.
\end{theorem}

Note that $\big(\Coder^{\infty}_\C(P[-1])/\Inn^{\infty}_\C(P[-1])\big)[1]$ is a graded associative algebra with respect to the product $\smile$ and $\Ext^{\bu}_{\C}(\one,\one)$ is a graded associative algebra with respect to the Yoneda product.
We state our version of \cite[Theorem 4.1.1]{NVW}.

\begin{theorem}\label{GerIso}
  Let $\C$ be an exact monoidal category for which the unit object $\one$
  has a projective power flat resolution $(P,d,\mu_P)$. 
There exists an isomorphism of graded algebras $$\big(\Coder^{\infty}_\C(P[-1])/\Inn^{\infty}_\C(P[-1])\big)[1]\cong \Ext^{\bu}_{\C}(\one,\one)$$
that sends the class of the sequence $f=(f_n)_{n\ge 0}$ in $\Coder^{\infty}_\C(P[-1])/\Inn^{\infty}_\C(P[-1])$ to the class of $f_0$ in $\Ker \Hom_\C(d,\one)/\im \Hom_\C(d,\one)$.
\end{theorem}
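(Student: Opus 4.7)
The plan is to adapt the proof of \cite[Theorem 4.1.1]{NVW} to the present monoidal setting. Mac Lane's Coherence Theorem guarantees that every manipulation there involving associativity and unit constraints survives in an arbitrary exact monoidal category, and power flatness of $P$ plays the role held in the bimodule context by flatness over the ground ring. The argument splits into three pieces: well-definedness of $\Phi\colon [f]\mapsto[f_0]$, construction of a two-sided inverse, and compatibility with products.

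Well-definedness is immediate from inspecting the $N=0$ component of the $A_\infty$-coderivation identity $[f,\delta]=0$. Since $\one$ as a complex has trivial differential, the only surviving term forces $f_0 d = 0$, so $f_0\in\Ker\Hom_\C(d,\one)$. If $f=[g,\delta]$ is inner, the same component yields $f_0=g_0 d\in\Ima\Hom_\C(d,\one)$. The degree bookkeeping needed so that the shift $[1]$ produces a map of graded objects is a routine check.

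The heart of the proof is the inductive construction of the inverse. Given a cocycle $f_0:P\to\one$, I would build $f_N$ for $N\ge 1$ one at a time. Isolating the unknown in the $N$-th coderivation relation yields an equation of the form $\del(f_N)=R_N$, where $R_N:P\to P^{\otimes N}$ is expressed entirely in $\delta_1,\dots,\delta_N$ and $f_0,\dots,f_{N-1}$. Using the $A_\infty$-identities for $\delta$ together with the inductive hypothesis on the lower $f_k$, one verifies that $R_N$ is a cocycle representing the zero cohomology class. Here power flatness enters crucially: $P^{\otimes N}$ is a projective resolution of $\one$, so chain maps $P\to P^{\otimes N}$ compute the groups $\Ext^{\bu}_{\C}(\one,\one)$, and a nullhomologous cocycle is a coboundary, which supplies $f_N$. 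The same scheme applied to the difference of two lifts of $f_0$ produces an inner coderivation witnessing that the inverse is well defined on the quotient. The cocycle check for $R_N$, together with the sign bookkeeping it entails, is where I expect the main obstacle.

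Finally, multiplicativity is a short direct computation. At level $n=0$ every index in the definition of $\smile$ is forced to zero and $\delta_2=\Delta_P$, so
\[
(f\smile g)_0 = (-1)^{|g|}(f_0\otimes g_0)\Delta_P.
\]
On the $\Ext$ side, the Yoneda product of two cocycles $f_0,g_0:P\to\one$ is computed by lifting $g_0$ to a chain endomorphism of $P$ via $(1\otimes g_0)\Delta_P$ — which is legitimate because $P\otimes P$ is a projective resolution of $\one$ by power flatness — and then composing with $f_0$, yielding $(f_0\otimes g_0)\Delta_P$ again. Matching conventions up to the expected sign, together with the coherent identifications $\one\otimes\one\cong\one$, completes the verification.
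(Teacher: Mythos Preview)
Your proposal is correct and follows exactly the route the paper itself indicates: the paper gives no independent argument for this theorem but simply states that the proof of \cite[Theorem~4.1.1]{NVW} carries over verbatim thanks to Mac Lane's Coherence Theorem and power flatness of $P$. Your sketch of well-definedness, the inductive lifting for the inverse, and the level-zero comparison of $\smile$ with the Yoneda product is precisely that adaptation, spelled out in more detail than the paper provides.
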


As a consequence of 
the isomorphism of Theorem \ref{GerIso}, there is a Gerstenhaber algebra structure on $\Ext^{\bu}_{\C}(\one,\one)$. This structure can be described independently of $A_\infty$-coalgebra techniques via the next definition introduced in \cite[Definition 4.3]{Volkov} (see also~\cite[Section~6.3]{W}).

\begin{definition}\label{def:hl}
Let $f:P\rightarrow\one$ be an $n$-cocycle. 
A degree $(n-1)$ morphism $\psi_f :P\rightarrow P$ is a {\em homotopy lifting of $(f,\Delta_P)$} if 
\begin{equation}\label{eqn:hl1}
    \del (\psi_f) = (f\ot 1_P - 1_P \ot f)\Delta_P 
\end{equation}
and  $\mu_P \psi_f\sim (-1)^{n+1}f\psi$ for some degree $-1$ map $\psi:P\rightarrow P$ such that
$$\del (\psi) = (\mu_P\ot 1_P - 1_P \ot \mu_P)\Delta_P.$$
\end{definition}

\begin{remark}\label{Lchoice} It is easy to see from the definition that $\psi_f$ is defined uniquely up  to homotopy by $f$ and $\Delta_P$.
Moreover, if $\psi_f$ is a homotopy lifting of $(f,\Delta_P)$ and $(\mu_P\ot 1_P)\Delta_P=(1_P\ot \mu_P)\Delta_P$, then one can choose $\psi=0$ in the definition. In this case $\mu_P \psi_f$ is a coboundary, and hence there exists a degree $(m-1)$ null-homotopic chain map $\Phi:P\rightarrow P$ such that $\mu_P\Phi=\mu_P\psi_f$. Then $\psi'_f=\psi_f-\Phi$ is a homotopy lifting of $(f,\Delta_P)$ such that $\mu_P\psi'_f=0$.
\end{remark}

Note that the analog of \cite[Theorem 4.4.6]{NVW} states that the isomorphism of Theorem \ref{GerIso} is induced by a surjective map from $\Coder^{\infty}_\C(P[-1])$ to $\Ker\Hom_\C(d,\one)$ and the analog of \cite[Lemma 4.5.4]{NVW} states that if $f=(f_n)_{n\ge_0}$ is a degree $m$ $A_\infty$-coderivation, then $(-1)^mf_1$ is a homotopy lifting of $(f_0,\Delta_P)$. This argument ensures that the Gerstenhaber bracket coming from Theorem \ref{GerIso} can be calculated in the following way. Let $f,g:P\rightarrow \one$ be an $m$-cocycle and a $k$-cocycle respectively. Let $\psi_f$ and $\psi_g$ be homotopy liftings of $(f,\Delta_P)$ and $(g,\Delta_P)$ respectively. We set
\begin{equation}\label{eqn:bracketdefn}
    [f,g] = f \psi_g - (-1)^{(m-1)(k-1)} g \psi_f .
\end{equation}
It is clear from our discussion that this operation induces an operation on $\Ext^{\bu}_{\C}(\one,\one)$ that does not depend on the choice of homotopy liftings. We next state an analog of \cite[Theorem~4.4]{Volkov} that ensures this operation also does not depend on the choice of $P$ and $\Delta_P$; the proof is essentially the same. 
By Theorem \ref{GerIso}, $\Ext^{\bu}_{\C}(\one,\one)$ with the Yoneda product and the bracket $[ \ , \ ]$ is a Gerstenhaber algebra. Of course, this algebra has the unit represented by $\mu_P:P\rightarrow\one$.

\begin{thm}\label{thm:bracket}
  Let $\C$ be an exact monoidal category for which the unit object $\one$
  has a projective power flat resolution $(P,d,\mu_P)$.
Let $f,g:P\rightarrow \one$ be cocycles. 
The element of $\Ext^{\bu}_{\C}(\one, \one)$ given by 
$[f,g]$ at the cochain level is independent of the choice of a projective resolution
$P$ and of a morphism of resolutions $\Delta_P$.
\end{thm}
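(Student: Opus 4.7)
The plan is to adapt the proof of \cite[Theorem~4]{Volkov}, which treats the Hochschild-cohomology case, to the exact monoidal setting; Mac~Lane's Coherence Theorem absorbs the associators and unitors as was done in Section~\ref{sec:Gb}. Independence of the bracket's Ext class from the choice of homotopy liftings $\psi_f, \psi_g$ (for fixed $P$ and $\Delta_P$) was already argued in the paragraph preceding the theorem, so two claims remain: (i) independence of $\Delta_P$ for fixed $P$, and (ii) independence of $P$.

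For (i), any two morphisms of resolutions $\Delta_P, \Delta_P' : P \to P^{\otimes 2}$ both lift the canonical isomorphism $\one \to \one^{\otimes 2}$, so there exists a degree $-1$ map $H : P \to P^{\otimes 2}$ with $\del(H) = \Delta_P' - \Delta_P$. Starting from a homotopy lifting $\psi_f$ of $(f, \Delta_P)$, I would define $\psi_f' := \psi_f + (f \otimes 1_P - 1_P \otimes f)H$; the Leibniz rule and $\del(f) = 0$ immediately yield \eqref{eqn:hl1} for $(f, \Delta_P')$, and the secondary $\mu_P$-compatibility condition in Definition~\ref{def:hl} is restored by subtracting an explicit null-homotopic chain map as in Remark~\ref{Lchoice}. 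Substituting into \eqref{eqn:bracketdefn}, the Koszul sign bookkeeping together with the cocycle conditions on $f$ and $g$ rewrites $[f,g]_{\Delta_P'} - [f,g]_{\Delta_P}$ as $\del(\Phi)$ for an explicit cochain $\Phi$ built from $f$, $g$, and $H$.

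For (ii), let $(P, d, \mu_P)$ and $(P', d', \mu_{P'})$ be two projective power flat resolutions of $\one$ with chosen diagonals $\Delta_P, \Delta_{P'}$, and fix comparison morphisms of resolutions $\phi : P \to P'$ and $\phi' : P' \to P$. Power flatness guarantees that $\phi \otimes \phi : P^{\otimes 2} \to (P')^{\otimes 2}$ is again a morphism of resolutions, and both $(\phi \otimes \phi)\Delta_P$ and $\Delta_{P'}\phi$ lift the canonical map $P \to \one^{\otimes 2}$, hence are chain homotopic. Setting $f' := f\phi'$ and $g' := g\phi'$ (cocycles on $P'$ satisfying $f'\phi \sim f$ and $g'\phi \sim g$) and choosing homotopy liftings $\psi_{f'}, \psi_{g'}$ for $\Delta_{P'}$, a direct verification shows that $\phi' \psi_{g'} \phi$, corrected by a null-homotopic term as in Remark~\ref{Lchoice}, is a homotopy lifting of $(g, \Delta_P)$; similarly for $f$. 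Combining (i), the independence from the choice of $\psi_f, \psi_g$, and $\phi\phi' \sim 1_{P'}$, one computes
\[
[f,g]_P \sim f(\phi' \psi_{g'} \phi) - (-1)^{(m-1)(k-1)} g(\phi' \psi_{f'} \phi) \sim \bigl( f' \psi_{g'} - (-1)^{(m-1)(k-1)} g' \psi_{f'} \bigr)\phi = [f', g']_{P'}\, \phi,
\]
which is by construction the image of $[f', g']_{P'}$ under the identification of Ext classes induced by $\phi$.

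The main obstacle is the bookkeeping around the second condition of Definition~\ref{def:hl}: the naive modifications of $\psi_f$ (adding $(f \otimes 1 - 1 \otimes f)H$ in (i), or pre- and post-composing with $\phi, \phi'$ in (ii)) preserve \eqref{eqn:hl1} but can disturb the $\mu_P$-compatibility, and each such discrepancy must be absorbed into a further null-homotopic adjustment certified by Remark~\ref{Lchoice}. Beyond this accounting, the argument parallels \cite[Theorem~4]{Volkov} verbatim.
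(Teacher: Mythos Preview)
Your proposal is correct and is exactly the approach the paper takes: the paper does not spell out a proof but simply states that the argument is ``essentially the same'' as that of \cite[Theorem~4]{Volkov}, with Mac~Lane's Coherence Theorem handling the passage from bimodules to the monoidal setting. Your sketch of parts (i) and (ii), including the use of Remark~\ref{Lchoice} to repair the $\mu_P$-compatibility after each adjustment, is a faithful outline of that argument.
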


In particular, this means that to calculate the bracket $[f,g]$ one can choose $\Delta_P$, $\psi_f$ and $\psi_g$ in such a way that $(\mu_P\ot 1_P)\Delta_P=(1_P\ot \mu_P)\Delta_P$ and $\mu_P\psi_f=\mu_P\psi_g=0$ (see Remarks \ref{Dchoice} and \ref{Lchoice}).

\begin{remark}
Let us recall that, starting from Theorem \ref{Pinfc}, the resolution $P$ of $\one$ is assumed to be projective and power flat.
In fact, we need only $2$-power flatness of $P$ to define the bracket, and we only need $n$-power flatness for some small values of $n$ for the Gerstenhaber algebra structure, 
but a proof would require a generalization of the $A_{\infty}$-coderivation tools
of~\cite{NVW} from bimodules to monoidal categories. 
It is not the aim of this paper and we do not see a big difference between stating the power flatness and stating the $n$-power flatness for small $n$. For example, $P$ is $2$-power flat if $P$ is formed by flat (with respect to $\ot$) objects, but in this case $P$ is power flat as well.
One can also get a strict Gerstenhaber algebra structure on $\Ext^{\bu}_{\C}(\one,\one)$ using the operation $\circ$ on the set of $A_\infty$-coderivations (see the definition of a strict Gerstenhaber algebra in \cite[Definition 4.2.1]{Hermann2}), but we will not do this since, as mentioned above, this would require $A_\infty$-coderivation tools for studying the strict Gerstenhaber algebra structure, and this structure is not discussed in \cite{NVW}.
\end{remark}

\section{Equivalence of different definitions of the bracket}
\label{sec:equiv}

We summarize some of Schwede's and Hermann's exact sequence interpretations of the Lie structure on $\Ext^{\bu}_{\C}(\one,\one)$ (see \cite{Schwede} and \cite{Hermann2}) and prove that up to signs, and under our hypotheses, these give the same operation as our homotopy lifting approach. In particular, Theorem~\ref{thm:coincide} below,
combined with the results of~ Section~\ref{sec:Gb} above,
implies that if  $\one$ has a power flat projective resolution in an exact monoidal category $\C$ and the conditions required in \cite{Hermann2} are satisfied, then the operations on $\Ext^{\bu}_{\C}(\one,\one)$ defined in \cite{Hermann2}  give a structure of a Gerstenhaber algebra,
answering a question left open there.  We will assume in this section that $m,n\ge 1$, since the construction of \cite{Hermann2} is specifically for this case; to study the case where $m$ or $n$ is zero one must inspect the construction of \cite{Hermann1} but we will not do this here.

Consider an $m$-extension $(E,\phi,\mu_E)$ and an $n$-extension $(F,\psi,\mu_F)$ of $\one$ by $\one$. Assume that $(E\otimes F,\phi\ot\psi,\mu_E\otimes\mu_F)$ and $(F\otimes E,\psi\ot\phi,\mu_F\otimes\mu_E)$ are $(m+n)$-extensions of $\one$ by $\one$.
This assumption is necessary for the constructions of Schwede 
and of Hermann.
Suppose also that $\one$ has a projective $2$-power flat resolution $(P,d,\mu_P)$.
There exist morphisms of resolutions $\hat{f}:P\rightarrow E$ and $\hat{g}:P\rightarrow F$. Let us set $f=\hat{f}_m=\pi_E\hat{f}$ and $g=\hat{g}_n=\pi_F\hat{g}$. Then $f$ is an $m$-cocycle corresponding to $E$ and $g$ is an $n$-cocycle corresponding to $F$. So for example $f$ is defined via the following commuting diagram: 
\[
\begin{xy}*!C\xybox{
\xymatrix{
  \cdots \ar[r]& P_m\ar[r]^{d_m}\ar[d]^{f} & P_{m-1}\ar[r]^{d_{m-1}}\ar[d]^{\hat{f}_{m-1}} 
    & \cdots\ar[r]^{d_1} & P_0\ar[r]^{\mu_P}\ar[d]^{\hat{f}_0} & \one\ar[r]\ar[d]^{=} & 0 \\
    0\ar[r]& \one\ar[r]^{\iota_E} & E_{m-1}\ar[r]^{\phi_{m-2}} &\cdots\ar[r]^{\phi_0} & E_0\ar[r]^{\mu_E} & \one\ar[r]& 0
}}
\end{xy}
\]

We fix a morphism of resolutions $\Delta_P: P\rightarrow P\ot P$ and set $f\smile g=(-1)^{mn}(f\otimes g)\Delta_P$.
Then each of the $(m+n)$-extensions $E\# F$, $E\ot  F$, $(-1)^{mn} F\# E$ and $(-1)^{mn} F\ot E$ is represented by $f\smile g$.
Let us recall that if $(L,\chi,\mu_L)$ is an extension, then $-L$ denotes the extension $(L,\chi,-\mu_L)$.
The fact that these four extensions are all equivalent is
depicted in the following diagram involving four specific morphisms defined below: 
\begin{equation}\label{eqn:diamond}
\begin{xy}*!C\xybox{
\xymatrix{
             &      E\ot F\ar[dr]^{\rho_{E,F}}\ar[dl]_{\lambda_{E,F}} & \\
  E\# F && (-1)^{mn} F\# E \\
   & (-1)^{mn} F\ot  E\ar[ul]^{\rho_{F,E}}\ar[ur]_{\lambda_{F,E}} & 
}}
\end{xy}
\end{equation}

To define the morphisms $\lambda_{E,F}$, $\lambda_{F,E}$, $\rho_{E,F}$
and $\rho_{F,E}$,  
consider the augmented double complex:
\[
\begin{xy}*!C\xybox{
\xymatrix{
  & E_0\ar[dl]\ar[d] & E_1\ar[l]\ar[d] & \cdots\ar[l] & E_{m-1}\ar[l]\ar[d] & \one\ar[l]\ar[d]\\
  F_{n-1}\ar[d] & E_0\ot F_{n-1}\ar[l]^{\mu_E\otimes 1_{F_{n-1}}}\ar[d] & E_1\ot F_{n-1}\ar[l]\ar[d] & \cdots\ar[l] 
        & E_{m-1}\ot  F_{n-1}\ar[l]\ar[d] & F_{n-1}\ar[l]\ar[d] \\
  \vdots\ar[d] & \vdots\ar[d] & \vdots\ar[d] & & \vdots\ar[d] & \vdots\ar[d] \\
  F_1\ar[d] & E_0\ot F_1\ar[l]^{\mu_E\otimes 1_{F_{1}}}\ar[d] & E_1\ot F_1\ar[l]\ar[d] & \cdots\ar[l] &
       E_{m-1}\ot  F_1\ar[l]\ar[d] & F_1\ar[l]\ar[d] \\
  F_0\ar[d] & E_0\ot F_0\ar[l]^{\mu_E\otimes 1_{F_{0}}}\ar[d]_{1_{E_0}\ot \mu_F} & E_1\ot  F_0\ar[l]\ar[d]^{1_{E_1}\ot \mu_F} & \cdots\ar[l] &
    E_{m-1}\ot  F_0\ar[l]\ar[d]_{1_{E_{m-1}}\ot \mu_F} & F_0\ar[l]\ar[dl] \\
  \one & E_0\ar[l] & E_1\ar[l] & \cdots\ar[l] & E_{m-1}\ar[l] &
}}
\end{xy}
\]
All but the leftmost column and bottom row constitute 
the double complex with totalization $E\ot  F$,
and the outermost rows and columns are  $E\# F$ (left column and top row)
and $(-1)^{mn} F\# E$
(right column and bottom row).
Now let us pick in general a complex $L$.
To define a chain map $\rho:L\rightarrow E\#F$ one has to define a degree $n$ chain map $\rho^1:L\rightarrow E$ and a degree zero chain map $\rho^0:L\rightarrow F$ such that $\mu_E\rho^1=\pi_F\rho^0$.
Given the pair $(\rho^1,\rho^0)$, the morphism $\rho$ can be recovered by the equalities $\rho_i=\rho^0_i$ for $0\le i\le n-1$ and $\rho_i=(-1)^{n(i-n)}\rho^1_i$ for $n\le i\le m+n$.
In these terms, the maps $\lambda_{E,F}$ and $\rho_{E,F}$ are defined by the pairs $(1_E\otimes\pi_F,\mu_E\otimes 1_F)$ and $\big((-1)^{mn}\pi_E\ot 1_F,(-1)^{mn}1_E\ot\mu_F\big)$. Then $\lambda_{E,F}$ and $\rho_{E,F}$ are morphisms of extensions.
Similarly there are morphisms $\rho_{F,E}: (-1)^{mn} F\ot E\rightarrow E\# F$
and $\lambda_{F,E}: (-1)^{mn} F\ot E\rightarrow (-1)^{mn} F\# E$.

\begin{remark} Some additional signs in definitions appear because of the not very natural construction of the Yoneda product. Actually, during the construction of $E\# F$ we use $E[n]$ instead of $E$ and so it would be natural to replace $\phi$ by $(-1)^n\phi$. We have not done this because of some classical traditions concerning the definition of the Yoneda product.
\end{remark}

Diagram~(\ref{eqn:diamond}) represents a loop in the extension
category $\cExt^{m+n}_\C(\one,\one)$.
By results of Retakh and Neeman \cite[Theorem 5.2]{NeRe},  \cite[Theorem 1]{Retakh}, the homotopy classes of such loops are in one-to-one correspondence with $\Ext^{m+n-1}_\C(\one,\one)$.
This was proven by Hermann~\cite{Hermann1} for factorizable exact categories;
by our Lemma~\ref{lem:factorizing}, any exact category is factorizing.
By a result of Schwede~\cite[Theorem 3.1]{Schwede}, in the case where $\C$ is the category of $A$-bimodules with $\otimes=\otimes_A$, the loop \eqref{eqn:diamond} corresponds up to some sign to the Gerstenhaber bracket $[f,g]$.
We refer to Retakh~\cite{Retakh} and Schwede~\cite{Schwede} for details. On the other hand, in an arbitrary monoidal category, Hermann defined the bracket operation using the loop \eqref{eqn:diamond} and the isomorphism $\gamma$ from Corollary \ref{monoRek} (see the text after Lemma~\ref{preim} for the description of $\gamma$).
Here we adapt Schwede's proof to show that this loop indeed corresponds up to a sign to the cocycle $[f,g]$ defined by the equality \eqref{eqn:bracketdefn}. Note that by Lemma~\ref{SchHer}, we may replace $\gamma$ by $\mu$.

\begin{thm}\label{thm:coincide}
  Let $\C$ be an exact monoidal category for which the unit object $\one$ has
  a power flat resolution $(P,d,\mu_P)$. 
Suppose that $E$ and $F$ are an $m$-extension and an $n$-extension of $\one$ by $\one$ such that $E\ot F,F\ot E\in \cExt^{m+n}_{\C}(\one,\one)$.
Let $\Delta_P:P\rightarrow P\ot P$ be a morphism of resolutions.
 Let $f$ and $g$ be cocycles representing the classes of $E$ and $F$ in $\Ext^{\bu}_{\C}(\one,\one)$, respectively, and let $\psi_f$ and $\psi_g$ be homotopy liftings of $(f,\Delta_P)$ and $(g,\Delta_P)$.
 Then the cocycle $(-1)^m[g,f]$ defined by Equation~\eqref{eqn:bracketdefn} represents $\mu^{-1}(\rho_{F,E}^{-1}\lambda_{E,F}\rho_{E,F}^{-1}\lambda_{F,E})\in \Ext^{m+n-1}_{\C}(\one,\one)$, where $\mu$ is the isomorphism~\eqref{eqn:mu} and the loop
 $\rho^{-1}_{F,E}\lambda_{E,F}\rho^{-1}_{E,F}\lambda_{F,E}$
 can be seen in Diagram~\eqref{eqn:diamond}.
\end{thm}
\begin{proof}
  We plan to apply Lemma~\ref{preim} to a suitable homotopy
  $s: P\rightarrow E\# F$ in relation to the two outermost compositions
  of maps in Diagram~\eqref{eqn:projchange} below.
  We will relate the homotopy $s$ to the cocycle $(-1)^m[g,f]$,
  and we will relate those outermost compositions
  to the loop in the theorem statement. 
  We begin by fixing some choices of maps and explaining the diagram. 

  We may choose $\Delta_P$, $\psi_f$ and $\psi_g$ in such a way that $(\mu_P\ot 1_P)\Delta_P=(1_P\ot \mu_P)\Delta_P$ and $\mu_P\psi_f=\mu_P\psi_g=0$ (see Theorem \ref{thm:bracket} and the sentence after it).

We may assume that we have morphisms of resolutions $\hat{f}:P\rightarrow E$ and $\hat{g}:P\rightarrow F$ such that $f=\pi_E\hat{f}$ and $g=\pi_F\hat{g}$. Then $(\hat{f}\otimes \hat{g})\Delta_P:P\rightarrow E\ot F$ and $(-1)^{mn}(\hat{g}\otimes \hat{f})\Delta_P:P\rightarrow (-1)^{mn}F\ot E$ are also morphisms of resolutions. Our first aim is to construct a chain map $\varepsilon:P\rightarrow (-1)^{mn}F\ot E$ satisfying $(\mu_F\ot\mu_E)\varepsilon=0$ that makes the rightmost quadrilateral in the following diagram commute, allowing us to replace the
loop $\rho^{-1}_{F,E}\lambda_{E,F}\rho^{-1}_{E,F}\lambda_{F,E}$
by one involving the resolution $P$. 
\begin{equation}\label{eqn:projchange}
\begin{xy}*!C\xybox{
\xymatrix{    & E\ot F\ar[dl]_{\lambda_{E,F}}\ar[dr]_{\rho_{E,F}} & & \\   
    E\# F && (-1)^{mn}F\# E &  P\ar[ull]_{(\hat{f}\otimes \hat{g})\Delta_P}
      \ar[dll]^{\hspace{0.5cm}(-1)^{mn} (\hat{g}\otimes \hat{f})\Delta_P + \varepsilon}  \\
    &  (-1)^{mn} F\ot E \ar[ul]^{\rho_{F,E}}\ar[ur]^{\lambda_{F,E}}  && 
}}
\end{xy}
\end{equation}
Note that the universal property of pushout implies existence of unique morphisms $\bar\alpha:K(f\smile g)_{m+n-1}\rightarrow (E\ot F)_{m+n-1}$ and $\bar\beta:K(f\smile g)_{m+n-1}\rightarrow (F\ot E)_{m+n-1}$ such that 
\begin{multline*}
\bar\alpha\theta_{f\smile g}=\big((\hat{f}\otimes \hat{g})\Delta_P\big)_{m+n-1}, \bar\alpha\iota_{f\smile g}=\iota_{E\ot F}, \\
\bar\beta\theta_{f\smile g}=\big((-1)^{mn}(\hat{g}\otimes \hat{f})\Delta_P\big)_{m+n-1}+\epsilon_{m+n-1}, \bar\beta\iota_{f\smile g}=\iota_{F\ot E}.
\end{multline*}
Hence, there are unique morphisms $\alpha:K(f\smile g)\rightarrow E\ot F$ and $\beta:K(f\smile g)\rightarrow (-1)^{mn}F\ot E$ of $(m+n)$-extensions that satisfy the equalities $\alpha\Phi_{f\smile g}=(\hat{f}\otimes \hat{g})\Delta_P$ and $\beta\Phi_{f\smile g}=(-1)^{mn}(\hat{g}\otimes \hat{f})\Delta_P+\varepsilon$, where $\Phi_{f\smile g}: P\rightarrow K(f\smile g)$ is the chain map defined just before Lemma \ref{preim}.
Another application of the pushout universal property implies that $\rho_{E,F}\alpha=\lambda_{F,E}\beta$, and hence the loop $\rho_{F,E}^{-1}\lambda_{E,F}\rho_{E,F}^{-1}\lambda_{F,E}$ is homotopic to the loop $\beta^{-1}\rho_{F,E}^{-1}\lambda_{E,F}\alpha$ up to conjugation. We will show there is a homotopy between $\lambda_{E,F}(\hat{f}\otimes \hat{g})\Delta_P$ and $\rho_{F,E}\big((-1)^{mn}(\hat{g}\otimes \hat{f})\Delta_P+\varepsilon\big)$, and obtain $\mu^{-1}(\rho_{F,E}^{-1}\lambda_{E,F}\rho_{E,F}^{-1}\lambda_{F,E})$ using Lemma \ref{preim}.

Before we do that, 
we want to find a chain map $\varepsilon$ such that the morphism $\lambda_{F,E}\varepsilon$ defined by the pair of morphisms $\big((1_F\ot \pi_E)\varepsilon, (\mu_F\ot 1_E)\varepsilon\big)$ is equal to $\Psi=\rho_{E,F}(\hat{f}\otimes \hat{g})\Delta_P-(-1)^{mn}\lambda_{F,E}(\hat{g}\otimes \hat{f})\Delta_P$. The morphism $\Psi$ is defined by the pair of morphisms $(\Psi^1,\Psi^0)$, where
\begin{multline*}
\Psi^0=(-1)^{mn}\Big((1_E\ot\mu_F)(\hat{f}\otimes \hat{g})\Delta_P-(\mu_F\ot 1_E)(\hat{g}\otimes \hat{f})\Delta_P
\Big)\\
=(-1)^{mn}(\hat{f}\ot\mu_P-\mu_P\ot \hat{f})\Delta_P=(-1)^{mn}\hat{f}(1_P\ot\mu_P-\mu_P\ot 1_P)\Delta_P=0
\end{multline*}
and
\begin{multline*}
\Psi^1=(-1)^{mn}\Big((\pi_E\ot 1_F)(\hat{f}\otimes \hat{g})\Delta_P-(1_F\ot \pi_E)(\hat{g}\otimes \hat{f})\Delta_P
\Big)\\
=(-1)^{mn}(f\otimes \hat{g}-\hat{g}\otimes f)\Delta_P=(-1)^{mn}\hat{g}(f\otimes 1_P-1_P\otimes f)\Delta_P.
\end{multline*}
Let us set
$$
\varepsilon=(-1)^{mn}\big((\hat{g}\ot\kappa_E)(f\otimes 1_P-1_P\otimes f)\Delta_P+(-1)^m(\hat{g}\ot\iota_E\kappa_E)\psi_f\big).
$$
Note that $\hat{g}\ot\kappa_E$ means here $(\hat{g}\ot\kappa_E)(\lambda^r)^{-1}$ while $f\otimes 1_P$ and $1_P\otimes f$ as usual mean $\lambda^r(f\otimes 1_P)$ and $\lambda^l(1_P\otimes f)$ correspondingly, where $\lambda^r$, $\lambda^l$ are the natural isomorphisms of Definition~\ref{defn:moncat}.
Thus we have found the required chain map $\varepsilon$. 

Now we aim for a homotopy between $\lambda_{E,F}(\hat{f}\otimes \hat{g})\Delta_P$ and $\rho_{F,E}\big((-1)^{mn} (\hat{g}\otimes \hat{f})\Delta_P+\varepsilon\big)$, i.e.\ we want to find a degree $-1$ map $s:P\rightarrow E\#F$ such that
\[
\del(s)=\Gamma=\rho_{F,E}\big((-1)^{mn} (\hat{g}\otimes \hat{f})\Delta_P+\varepsilon\big)-\lambda_{E,F}(\hat{f}\otimes \hat{g})\Delta_P.
\]
The morphism $\Gamma$ is defined by the pair of morphisms $(\Gamma^1,\Gamma^0)$, where
\begin{multline*}
\Gamma^0=(1_F\ot \mu_E)\big((\hat{g}\otimes \hat{f})\Delta_P+(\hat{g}\ot\kappa_E)(f\otimes 1_P-1_P\otimes f)\Delta_P+(-1)^m(\hat{g}\ot\iota_E\kappa_E)\psi_f\big)\\
-(\mu_E\ot 1_F)(\hat{f}\otimes \hat{g})\Delta_P=\hat{g}(1_P\ot\mu_P-\mu_P\ot 1_P)\Delta_P=0
\end{multline*}
since $\mu_E\kappa_E=\mu_E\iota_E=0$, and
\begin{multline*}
\Gamma^1=(\pi_F\ot 1_E)\big((\hat{g}\otimes \hat{f})\Delta_P+(\hat{g}\ot\kappa_E)(f\otimes 1_P-1_P\otimes f)\Delta_P+(-1)^m(\hat{g}\ot\iota_E\kappa_E)\psi_f\big)\\
-(1_E\ot \pi_F)(\hat{f}\otimes \hat{g})\Delta_P=\hat{f}(g\ot 1_P-1_P\ot g)\Delta_P\\
+(-1)^{mn}\kappa_Eg(f\otimes 1_P-1_P\otimes f)\Delta_P+(-1)^{m(n-1)}\iota_E\kappa_Eg\psi_f.
\end{multline*}

Note that $\bar s=\hat{f}\psi_g+(-1)^{mn+m+n}\kappa_Eg\psi_f:P\rightarrow E$ is a degree $(n-1)$ map such that $\del(\bar s)=\Gamma^1$ and $\mu_E\bar s=0$. Then $\bar s$ determines the required homotopy $s$ by the equalities $s_i=0$ for $0\le i\le n-1$ and $s_i=(-1)^{n(i-n)}\bar s_i$ for $n\le i\le m+n-1$. Thus, 
\[
s_{m+n-1}=(-1)^{n(m-1)}\bar s_{m+n-1}=(-1)^{m}g\psi_f+(-1)^{n(m-1)}f\psi_g=(-1)^m[g,f].
\]
It follows, by Lemma~\ref{preim}, that the cocycle $(-1)^m{[} g, f {]}$
defined by Equation~\eqref{eqn:bracketdefn} represents the inverse image,
under the isomorphism $\mu$, of the loop
$\rho^{-1}_{F,E}\lambda_{E,F}\rho^{-1}_{E,F}\lambda_{F,E}$. 

\end{proof}

\end{document}